\newtheorem{remark}{Remark}[section]
\newtheorem{definition}{Definition}[section]
\newtheorem{lemma}[remark]{Lemma}
\newtheorem{theorem}[remark]{Theorem}
\newtheorem{example}[definition]{Example}
\newtheorem{corollary}[remark]{Corollary}
\title{On the $k$-metric Dimension of Metric Spaces}
\author{ and }
\author{A. F. Beardon$^{(1)}$ and J. A. Rodr\'{\i}guez-Vel\'{a}zquez$^{(2)}$\\$^{(1)}${\small Centre for Mathematical Sciences}\\{\small University of Cambridge}\\{\small Wilberforce Road, Cambridge CB3 0WB, United Kingdom}\\
   {\small
afb\@@dpmms.cam.ac.uk}\\
$^{(2)}${\small Departament d'Enginyeria Inform\`atica i Matem\`atiques }\\
{\small Universitat Rovira i Virgili } \\  {\small Av. Pa\"{\i}sos
Catalans 26, 43007 Tarragona, Spain.} \\{\small
   juanalberto.rodriguez\@@urv.cat}\\   
}
\begin{document}
\maketitle

\begin{abstract}
The metric dimension of a general metric space was defined in 1953,
applied to the set of vertices of a graph metric in 1975, and 
developed further for metric spaces in 2013. It was then generalised
in 2015 to the $k$-metric dimension of a graph for each positive
integer $k$, where $k=1$ corresponds to the original definition. Here,
we discuss the $k$-metric dimension of general metric spaces.
\end{abstract}


\section{Introduction}
The metric dimension of a general metric space was introduced in 1953
in \cite[p.95]{Blu53} but attracted little attention until, about
twenty years later, it was applied to the distances between vertices
of a graph \cite{Harary1976, Khuller1996, Mel84-113,Slater1975}. Since
then it has been frequently used in graph theory, chemistry, biology,
robotics and many other disciplines. The theory was developed further
in 2013 for general metric spaces \cite{Sheng2013}. More recently,
the theory of metric dimension has been generalised, again in the
context of graph theory, to the notion of a $k$-metric dimension,
where $k$ is any positive integer, and where the case $k=1$
corresponds to the original theory 
\cite{Estrada-Moreno2013,Estrada-Moreno2013corona,Estrada-Moreno2014a,
Estrada-Moreno2014b,Estrada-Moreno2014}.
Here we develop the idea of the $k$-metric dimension both in graph
theory and in metric spaces. As the theory is trivial when the space
has at most two points, we shall assume that any space we are
considering has \emph{at least three points}. 
Finally, whenever we discuss a connected graph $G$, we shall always 
consider the metric space $(X,d)$, where $X$ is the vertex set of
$G$, and $d$ is the usual graph metric in which the distance between
two vertices is the smallest number of edges that connect them.

Let $(X,d)$ be a metric space. If $X$ is a finite set, we denote its
cardinality by $|X|$; if $X$ is an infinite set, we put $|X| =
+\infty$. In fact, it is possible to develop the theory with $|X|$ any
cardinal number, but we shall not do this. The distances from a point
$x$ in $X$ to the points $a$ in a subset $A$ of $X$ are given by the
function $a \mapsto d(x,a)$, and the subset $A$ is said to
\emph{resolve} $X$ if each point $x$ is uniquely determined by this
function.  Thus $A$ resolves $X$ if and only if $d(x,a)=d(y,a)$ 
for all $a$ in $A$ implies that $x=y$; informally, if an object in
$x$ knows its distance from each point of $A$, then it knows exactly
where it is located in $X$. The class $\mathcal{R}(X)$ of subsets of
$X$ that resolve $X$ is non-empty since $X$ resolves $X$. The
\emph{metric dimension} ${\rm dim}(X)$ of $(X,d)$ is minimum value of
$|S|$ taken over all $S$ in $\mathcal{R}(X)$. 
The sets in $\mathcal{R}(X)$ are
called the \emph{metric generators}, or \emph{resolving subsets}, of
$X$, and $S$ is a \emph{metric basis} of $X$ if $S\in \mathcal{R}(X)$
and $|S|= {\rm dim}(X)$. A metric generator of a metric space $(X,d)$
is, in effect, a \emph{global co-ordinate system} on $X$. For example,
if $(x_1,\ldots,x_m)$ is an ordered metric generator of $X$, then the
map $\Delta:X \to \mathbb{R}^m$ given by
\begin{equation}\label{151123a}
\Delta(x)= \Big(d(x,x_1), \ldots,d(x,x_m)\Big)
\end{equation}
is injective (for this vector determines $x$), so that $\Delta$ is a
bijection from $X$ to a subset of $\mathbb{R}^m$, and $X$ inherits its
co-ordinates from this subset.

Now let $k$ be a positive integer, and $(X,d)$ a metric space. A
subset $S$ of $X$ is a \emph{$k$-metric generator} for $X$ (see
\cite{Estrada-Moreno2013}) if and only if any pair of points in $X$ is
distinguished by at least $k$ elements of $S$: that is, for any pair
of distinct points $u$ and $v$ in $X$, there exist $k$ points
$w_1,w_2,...,w_k$ in $S$ such that $$d(u,w_i)\ne d(v,w_i),\quad
i=1,\ldots,k.$$

A $k$-metric generator of minimum cardinality in $X$ is called a
\emph{$k$-metric basis}, and its cardinality, which is denoted by
$\dim_{k}(X)$, is called the $k$-\emph{metric dimension} of $X$.
 Let $\mathcal{R}_k(X)$ be the set of $k$-metric generators for $X$.
Since $\mathcal{R}_1(X)=\mathcal{R}(X)$,
we see that $\dim_1(X) = \dim(X)$.
 Also, as $\inf\varnothing =
+\infty$, this means that $\dim_k(X) = +\infty$ if and only if no
finite subset of $X$ is a $k$-metric generator for $X$.

Given a metric space $(X,d)$, we define the \emph{dimension sequence}
of $X$ to be the sequence 
$$ \big({\rm dim}_1(X), {\rm dim}_2(X), \ldots, {\rm dim}_k(X),
\ldots\big),$$
and we address the following two problems. 
\begin{itemize}
\item 
Can we find necessary and sufficient conditions for a sequence
$(d_1,d_2,d_3,\ldots)$ to be the dimension sequence of some metric
space?
\item
How does the dimension sequence of $(X,d)$ relate to the properties of
$(X,d)$? 
\end{itemize}

In Sections \ref{bisectors}, \ref{genresults} and \ref{coordinates}
we provide some basic results on the $k$-metric dimension, and in
Section \ref{someexamples} we calculate the dimension sequences of
some metric spaces. We then apply these ideas to the join of two
metric spaces, and to the Cayley graph of a finitely generated group.


\section{Bisectors}\label{bisectors}
As shown in \cite{Sheng2013}, the ideas about metric dimension are
best described in terms of bisectors. For \emph{distinct} $u$ and $v$
in $X$, the \emph{bisector} $B(u|v)$ of $u$ and $v$ is given by
$$B(u|v) = \{x\in X: d(x,u)=d(x,v)\}.$$
The complement of $B(u|v)$ is denoted by $B^c(u|v)$; thus 
$$B^c(u|v) = \{x \in X: d(x,u) \neq d(x,v)\},$$ and this contains
both $u$ and $v$. Whenever we speak of a bisector $B$, we shall assume
that it is some bisector $B(u|v)$, where $u\neq v$, so that its
complement $B^c$ is not empty.

Let us now consider the $k$-metric dimension from the perspective of
bisectors. A subset $A$ of $X$ \emph{fails to resolve} $X$ if and only
if there are distinct points $u$ and $v$ in $X$ such that $d(u,a) =
d(v,a)$ for all $a$ in $A$. Thus $A$ resolves $X$ if and only if $A$
is not contained in any bisector or, equivalently, \emph{if and only
if for every bisector $B$, we have} $|B^c \cap A| \geq 1$. This leads
to an alternative (but equivalent) definition of the metric dimension
${\rm dim}(X)$, namely  
$${\rm dim}(X) = \inf\{|A|: A\subset X \ 
\text{and, for all bisectors\ }B,\  |B^c\cap A| \geq 1\}.$$
Again, this infimum may be $+\infty$. The extension to the $k$-metric
dimension ${\rm dim}_k(X)$ of $X$ is straightforward:
\begin{equation}\label{151123c}
{\rm dim}_k(X) = \inf\{|A|: A\subset X \  
\text{and, for all bisectors\ }B,\  |B^c\cap A| \geq k\}.
\end{equation} 
Note that if $X$ is a finite set then ${\rm dim}_{|X|+1}(X) =
+\infty$. 

Clearly, the values ${\rm dim}_k(X)$ depend only on the class
$\mathcal{B}$ of bisectors in $X$; for example, ${\rm dim}_1(X)=1$ if
and only if there is some point in $X$ that is not in any bisector.
More generally, in all cases, $\dim_k(X) \geq k$, and equality
holds here if and only if there are $k$ points of $X$ that do not lie
in any bisector. For example, if $X$ is the real, closed interval
$[0,1]$ with the Euclidean metric, then ${\rm dim}_k(X) = k$ for
$k=1,2$. For a more general example of this type, let $X =
\{\sqrt{p}: \text{$p$ \ a prime number}\}$ with the Euclidean metric
$d$. If $p$, $q$ and $r$ are primes, with $p\neq q$, then $\sqrt{r}
\in B(\sqrt{p}|\sqrt{q})$ implies $\sqrt{r} =
\tfrac12(\sqrt{p}+\sqrt{q})$;
hence $4r= p+q+2\sqrt{pq}$. Since $\sqrt{pq}$ is irrational, this is
false; hence every bisector is empty. It follows that ${\rm dim}_k(X)
= k$ for $k=1,2,\ldots$; thus the dimension sequence of $(X,d)$ is
$(1,2,3,\ldots)$.


\section{The monotonicity of dimensions}\label{genresults}
Let $(X,d)$ be a metric space. Then, from \eqref{151123c}, we have
${\rm dim}_k(X) \leq {\rm dim}_{k+1}(X)$, but we shall now establish
the stronger inequality ${\rm dim}_k(X) +1 \leq  {\rm dim}_{k+1}(X)$
(which is ${\rm dim}_k(X) < {\rm dim}_{k+1}(X)$ when the dimensions
are finite, but not when they are $+\infty$). This inequality
is known for graphs; see
\cite{Estrada-Moreno2013,Estrada-Moreno2013corona})
where it is an important tool. 

\begin{theorem}\label{Monoty-AFB}
Let $(X,d)$ be a metric space. Then, for $k = 1,2,\ldots$, 
\\ {\rm (i)} 
if ${\rm dim}_k(X) < +\infty$ then $\dim_k(X) < \dim_{k+1}(X)$; 
\\ {\rm (ii)} 
if ${\rm dim}_k(X) = +\infty$ then $\dim_{k+1}(X) = +\infty$.
\\ In particular, $\dim_k(X) +1 \geq \dim_1(X)+ k$.
\end{theorem}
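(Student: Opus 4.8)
The plan is to work throughout with the bisector characterisation \eqref{151123c} of $\dim_k(X)$ and to extract everything from a single observation: for any set $A\subseteq X$, any point $s\in A$, and any bisector $B$, deleting $s$ decreases $|B^c\cap A|$ by at most $1$. Concretely, I would first prove the uniform statement that \emph{for every $k$, if $\dim_{k+1}(X)<+\infty$ then $\dim_k(X)\le \dim_{k+1}(X)-1$}; parts (i) and (ii) and the displayed inequality then follow by routine bookkeeping with the value $+\infty$.

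For the uniform statement, suppose $\dim_{k+1}(X)<+\infty$ and let $S$ be a $(k+1)$-metric basis, so $S$ is finite, $|S|=\dim_{k+1}(X)\ge k+1\ge 1$, and $|B^c\cap S|\ge k+1$ for every bisector $B$. Choose any $s\in S$ and set $S'=S\setminus\{s\}$. For every bisector $B$ we have $B^c\cap S'=(B^c\cap S)\setminus\{s\}$, hence $|B^c\cap S'|\ge |B^c\cap S|-1\ge k$; thus $S'$ is a $k$-metric generator, and $\dim_k(X)\le |S'|=|S|-1=\dim_{k+1}(X)-1$.

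Part (ii) I would obtain by contraposition, using only the trivial half of monotonicity: every $(k+1)$-metric generator is a $k$-metric generator (since $k+1\ge k$), so if $\dim_{k+1}(X)<+\infty$ then $\dim_k(X)<+\infty$; equivalently, $\dim_k(X)=+\infty$ forces $\dim_{k+1}(X)=+\infty$. For part (i): if $\dim_k(X)<+\infty$, then either $\dim_{k+1}(X)=+\infty>\dim_k(X)$, or $\dim_{k+1}(X)<+\infty$ and the uniform statement gives $\dim_k(X)\le\dim_{k+1}(X)-1<\dim_{k+1}(X)$. For the final inequality $\dim_k(X)+1\ge \dim_1(X)+k$, I would induct on $k$: if $\dim_1(X),\dots,\dim_k(X)$ are all finite, iterating the uniform statement gives $\dim_k(X)\ge\dim_1(X)+(k-1)$, which rearranges to the claim; and if $\dim_j(X)=+\infty$ for some $j\le k$, then (ii) propagates this to $\dim_k(X)=+\infty$, so the inequality holds trivially in $[0,+\infty]$.

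There is no real obstacle here; the entire content is the one-line set-theoretic fact that $|B^c\cap(S\setminus\{s\})|\ge|B^c\cap S|-1$. The only thing requiring care is the handling of $+\infty$: one must split into the finite and infinite cases so that the strict inequality in (i) and the additive shift in the displayed inequality are stated correctly (the latter being precisely $\dim_k(X)\ge\dim_1(X)+k-1$ whenever $\dim_k(X)$ is finite).
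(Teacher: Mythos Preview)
Your argument is correct and is essentially the paper's own proof: take a $(k+1)$-metric basis, delete one point, and observe that this drops $|B^c\cap S|$ by at most one for every bisector $B$, giving $\dim_k(X)\le \dim_{k+1}(X)-1$; then handle the $+\infty$ cases separately and induct. Your write-up is a bit more explicit about the contrapositive for (ii) and the case-split in the induction, but the content is the same.
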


\begin{proof} 
First, (ii) follows immediately from \eqref{151123c}. Next, (i) is
true if $\dim_{k+1}(X) = +\infty$, so we may assume that
$\dim_{k+1}(X) = p < +\infty$. Thus there is a subset
$\{x_1,\ldots,x_p\}$ (with the $x_i$ distinct) of $X$ such that for
every bisector $B$, $|B^c \cap\{x_1,\ldots,x_p\}| \geq k+1$. As
$k \geq 1$ we see that $p \geq 2$. 
Clearly, $|B^c \cap \{x_1,\ldots,x_{p-1}\}| \geq k$ for every bisector
$B$; hence $\dim_k(X) \leq p-1 < \dim_{k+1}(X)$. The last inequality
follows by induction.
\end{proof}


\section{The $1$-metric dimension}\label{coordinates}

Theorem \ref{Monoty-AFB} shows that if $+\infty$ occurs as a term in
the dimension sequence of $(X,d)$, then all subsequent terms are also
$+\infty$. Thus $\dim_1(X) = +\infty$ if and only if $(X,d)$ has
dimension sequence $(+\infty,+\infty,+\infty,\ldots)$. The next result
shows when this is so. 

\begin{theorem}\label{InfinityDimension}
Let $(X,d)$ be a metric space. Then $\dim_1(X) = +\infty$ if and only
if every finite subset of $X$ lies in some bisector. In particular,
if $X$ is the union of an increasing sequence of bisectors, then
$\dim_1(X) = +\infty$.
\end{theorem}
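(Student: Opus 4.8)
The plan is to prove the biconditional in \eqref{151123c} form, using $\dim_1$ as the infimum of $|A|$ over sets $A$ meeting every $B^c$. For the easy direction, suppose some finite subset $F\subset X$ is \emph{not} contained in any bisector; then by the bisector reformulation $F$ resolves $X$, so $F\in\mathcal R(X)$ and $\dim_1(X)\le|F|<+\infty$. Contrapositively, if $\dim_1(X)=+\infty$ then no finite set resolves $X$, i.e. every finite subset of $X$ lies in some bisector. That is one implication with nothing to do beyond quoting the Section~\ref{bisectors} equivalence "$A$ resolves $X$ iff $A$ is contained in no bisector."

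For the reverse direction, assume every finite subset of $X$ lies in some bisector, and suppose for contradiction that $\dim_1(X)=m<+\infty$. Then there is a finite resolving set $S=\{x_1,\dots,x_m\}$, i.e.\ a finite set contained in no bisector — contradicting the hypothesis. So $\dim_1(X)=+\infty$. Again this is immediate from the same equivalence; the whole first statement is essentially a restatement of the bisector characterization of resolving sets, combined with the observation that "$\dim_1(X)<+\infty$" means "some finite set resolves $X$."

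For the "in particular" clause, suppose $X=\bigcup_{n\ge 1} B_n$ where $B_1\subseteq B_2\subseteq\cdots$ are bisectors. Given any finite subset $F=\{y_1,\dots,y_r\}$ of $X$, each $y_j$ lies in some $B_{n_j}$; setting $N=\max_j n_j$ and using the increasing property gives $F\subseteq B_N$, a single bisector. Hence every finite subset of $X$ lies in a bisector, and the first part of the theorem yields $\dim_1(X)=+\infty$.

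I do not anticipate a genuine obstacle here: the theorem is a direct translation of the bisector formulation of resolvability into the language of "finite subsets," and the only mild point requiring care is making explicit that "$\dim_1(X)<+\infty$" is by definition the existence of a \emph{finite} resolving set, so that its negation is exactly "every finite subset lies in some bisector." The increasing-sequence corollary is then a one-line consequence via the pigeonhole/cofinality argument above.
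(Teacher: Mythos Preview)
Your proof is correct and follows essentially the same approach as the paper: both derive the biconditional directly from the Section~\ref{bisectors} equivalence that a set resolves $X$ if and only if it is contained in no bisector, and both handle the ``in particular'' clause by the same maximum-index argument to place a finite set inside a single $B_N$. You have merely spelled out the two directions of the biconditional more explicitly than the paper, which dispatches it in one sentence.
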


\begin{proof}
First, the definition of ${\rm dim}(X)$ implies that ${\rm dim}_1(X) =
+\infty$ if and only if every finite subset of $X$ lies in some
bisector. The second statement holds because if $X = \cup_n B_n$,
where $B_1,B_2,\ldots$ is an increasing sequence of bisectors, then,
given any finite subset $\{x_1,\ldots,x_m\}$ of
$X$, each $x_j$ lies in some $B_{i_j}$, and $\{x_1,\ldots,x_m\}
\subset B_r$, where $r = \max\{i_1,\ldots,i_m\}$.
\end{proof}

What can be said if ${\rm dim}_1(X)< + \infty$? It seems that we can
obtain very little information from the \emph{single} assumption that
$\dim_1(X) < +\infty$; for example, for each $r \geq 0$ choose a point
$x_r$ in $\mathbb{R}^n$ with $\|x_r\|=r$, and let $X = \{x_r: r \geq
0\}$. Then $\{0\}$ is a $1$-metric basis for $X$, and ${\rm
dim}_1(X)=1$ but we can say almost nothing about the topological
structure of $X$. However, we can say more if we know that $X$ is
compact.

\begin{theorem}\label{151123d}
Let $(X,d)$ be a compact metric space with ${\rm dim}_1(X)=m <
+\infty$. Then $(X,d)$ is homeomorphic to a compact subset of
$\mathbb{R}^m$. 
\end{theorem}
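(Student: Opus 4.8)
The plan is to use the coordinate map $\Delta$ from \eqref{151123a} directly. Since ${\rm dim}_1(X) = m < +\infty$, there is a metric basis $\{x_1,\ldots,x_m\}$, and we get a map $\Delta : X \to \mathbb{R}^m$ defined by $\Delta(x) = (d(x,x_1),\ldots,d(x,x_m))$. The defining property of a $1$-metric generator (equivalently, a resolving set) is exactly that $\Delta$ is injective. So the whole theorem reduces to showing that this particular injective map is a homeomorphism onto its image and that the image is compact.

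The key steps, in order, are: (1) observe that each coordinate function $x \mapsto d(x,x_i)$ is continuous on $X$ (indeed $1$-Lipschitz, by the triangle inequality), so $\Delta$ is continuous; (2) note $\Delta$ is injective because $\{x_1,\ldots,x_m\}$ resolves $X$; (3) invoke compactness of $X$: a continuous bijection from a compact space onto a Hausdorff space is a homeomorphism. Concretely, $\Delta : X \to \Delta(X)$ is a continuous bijection, $X$ is compact, and $\Delta(X) \subseteq \mathbb{R}^m$ is Hausdorff, so $\Delta$ is a homeomorphism onto $\Delta(X)$. (4) Finally, $\Delta(X)$ is the continuous image of a compact set, hence compact, so it is a compact subset of $\mathbb{R}^m$, and $X$ is homeomorphic to it. One should state whichever metric on $\mathbb{R}^m$ is convenient; all the usual $\ell^p$ norms are equivalent, so the choice is immaterial.

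Honestly, there is no serious obstacle here: the substantive content is the classical fact that a continuous bijection out of a compact space into a Hausdorff space is automatically a homeomorphism (its inverse is continuous because it maps closed sets, which are compact, to compact hence closed sets). The only things to be careful about are that $\{x_1,\ldots,x_m\}$ being a metric \emph{basis} (rather than just some resolving set) is not needed — any finite resolving set would do, but ${\rm dim}_1(X) = m$ guarantees one of size $m$, which is what pins down the ambient dimension $m$ — and that continuity of the distance functions does not require compactness; compactness is used \emph{only} to upgrade the continuous bijection to a homeomorphism and to conclude that the image is compact. It may be worth remarking that without compactness the conclusion genuinely fails, consistent with the example preceding the theorem.
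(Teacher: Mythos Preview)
Your proposal is correct and follows essentially the same approach as the paper: define $\Delta(x) = \big(d(x,x_1),\ldots,d(x,x_m)\big)$ using a metric basis, observe it is continuous (indeed Lipschitz) and injective, and then invoke the standard fact that a continuous bijection from a compact space to a Hausdorff space is a homeomorphism. The paper's proof is exactly this, only more tersely stated.
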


\begin{proof}
Suppose that $X$ is compact, and that ${\rm dim}_1(X) = m < +\infty$.
Then there is a $1$-metric basis $\{x_1,\ldots,x_m\}$, and the
corresponding bijection $\Delta$ in \eqref{151123a} that maps $X$ onto
some subset of $\mathbb{R}^m$. Now $\Delta$ is
continuous on $X$ since, for each $j$, we have
$$|\Delta(x)-\Delta(y)| \leq \sum_{j=1}^m |d(x,x_j)-d(y,x_j)| \leq
m d(x,y).$$ 
As $\Delta$ is a continuous, injective map from a compact space to the
Hausdorff space $\mathbb{R}^m$ it follows (by a well known result in
topology) that it is a homeomorphism. 
\end{proof}

This result is related to the following result in \cite{Sheng2013}
(see also \cite{Murphy}).

\begin{theorem} \label{160128a}
If $(X,d)$ is a compact, connected metric space with $\dim_1(X)=1$
then $X$ is homeomorphic to $[0,1]$.
\end{theorem}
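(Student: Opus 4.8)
The plan is to show that the bijection $\Delta\colon X\to\mathbb{R}$ from \eqref{151123a} (here $m=1$, so $\Delta(x)=d(x,x_1)$ for a fixed basis point $x_1$) is in fact a homeomorphism onto a closed interval. By Theorem \ref{151123d}, $\Delta$ is already a homeomorphism of $X$ onto the compact set $\Delta(X)\subset\mathbb{R}$; what remains is to identify $\Delta(X)$ up to homeomorphism. Since $X$ is connected and $\Delta$ is continuous, $\Delta(X)$ is a connected compact subset of $\mathbb{R}$, hence a closed interval $[a,b]$. If $a=b$ then $X$ is a single point, contradicting our standing assumption that $|X|\geq 3$; so $a<b$ and $[a,b]$ is homeomorphic to $[0,1]$. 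Composing the homeomorphisms $X\to[a,b]\to[0,1]$ gives the result.

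The one point that needs a little care, and which I expect to be the main obstacle, is verifying that $\Delta$ really is injective here, i.e. that a connected compact space with $\dim_1(X)=1$ genuinely admits a one-point resolving set. This is immediate from the definitions — $\dim_1(X)=1$ means there is a singleton $\{x_1\}$ that resolves $X$, which is exactly the statement that $x\mapsto d(x,x_1)$ is injective — so there is no real difficulty, but it is worth stating explicitly since the whole argument rests on it. One should also note why $a < b$: connectedness alone does not forbid $X$ from being a point, so the hypothesis $|X|\geq 3$ (or merely $|X|\geq 2$) is what rules out the degenerate interval.

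Thus the proof is essentially a corollary of Theorem \ref{151123d} together with the elementary fact that the continuous image of a connected compact metric space in $\mathbb{R}$ is a closed bounded interval. I would write it in three short steps: (1) invoke Theorem \ref{151123d} to get that $\Delta$ is a homeomorphism of $X$ onto $\Delta(X)$; (2) observe $\Delta(X)$ is connected and compact in $\mathbb{R}$, hence equals some $[a,b]$ with $a<b$; (3) conclude $X\cong[a,b]\cong[0,1]$. No nontrivial calculation is involved; the content is entirely in the structure of connected compact subsets of the line.
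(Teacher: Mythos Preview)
Your proof is correct. Note, however, that the paper does not actually prove Theorem~\ref{160128a}: it is quoted as a known result from \cite{Sheng2013} (see also \cite{Murphy}) and then \emph{used} inside the proof of the subsequent theorem on arcwise connected spaces. So there is no ``paper's own proof'' to compare against.

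That said, your argument is the natural one in the context of this paper, and in a sense the paper has already done the work for you: Theorem~\ref{151123d} supplies the homeomorphism $\Delta$ onto a compact subset of $\mathbb{R}$, and connectedness forces that subset to be a nondegenerate closed interval. Your three-step write-up is exactly right, and the remark that $|X|\geq 2$ (rather than the standing hypothesis $|X|\geq 3$) suffices to exclude the degenerate case is a fair observation. If anything, your derivation shows that Theorem~\ref{160128a} is an immediate corollary of Theorem~\ref{151123d} once connectedness is added, which the paper does not make explicit.
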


The compactness is essential here as there is an example in
\cite{Sheng2013} of a connected, but not arcwise connected, metric
space $X$ with ${\rm dim}_1(X)=1$. As $X$ is not arcwise connected, it
is not homeomorphic to $[0,1]$. It is conjectured in \cite{Sheng2013}
that if $X$ is arcwise connected, and ${\rm dim}_1(X)=1$ then $X$ is a
Jordan arc (this means that $X$ is homeomorphic to one of the real
intervals $[0,1]$ and $[0,+\infty)$), and we can now show that this is
so.

\begin{theorem}
If $X$ is an arcwise connected metric space with ${\rm dim}_1(X)=1$,
then $X$ is a Jordan arc.
\end{theorem}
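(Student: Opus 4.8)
The plan is to show that $X$ is homeomorphic to either $[0,1]$ or $[0,+\infty)$ by first establishing that $X$ has at most two non-cut-points (the "endpoints"), and then invoking the known topological characterization of arcs. The starting point is the bijection $\Delta : X \to \mathbb{R}$ from \eqref{151123a} associated to a $1$-metric basis $\{x_1\}$; that is, $\Delta(x) = d(x,x_1)$. As in the proof of Theorem \ref{151123d}, $\Delta$ is continuous, being $1$-Lipschitz. The key point is that $\Delta$ is a continuous injection of $X$ onto an interval-like subset of $[0,+\infty)$: since $X$ is arcwise connected, it is connected, so $\Delta(X)$ is a connected subset of $\mathbb{R}$, hence an interval $J$, and since $\Delta(X) \ni \Delta(x_1) = 0$ and all values are $\geq 0$, this interval has left endpoint $0$.

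**Next I would** exploit arcwise connectedness to upgrade the continuous bijection $\Delta$ to a homeomorphism. Fix any $x \in X$ and let $\alpha : [0,1] \to X$ be an arc (a homeomorphism onto its image) from $x_1$ to $x$. Then $\Delta \circ \alpha : [0,1] \to \mathbb{R}$ is continuous with $(\Delta\circ\alpha)(0) = 0$ and $(\Delta\circ\alpha)(1) = d(x,x_1)$, so by the intermediate value theorem its image contains $[0, d(x,x_1)]$. Because $\Delta$ is injective on $X$, and $\alpha$ is injective, one shows that $\Delta\circ\alpha$ must itself be injective: if it took the same value at two parameters $s \ne t$, then $\alpha(s)$ and $\alpha(t)$ would be distinct points of $X$ with the same $\Delta$-value, contradicting injectivity of $\Delta$. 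A continuous injection $[0,1]\to\mathbb R$ is strictly monotone, so $\Delta\circ\alpha$ is a homeomorphism onto $[0, d(x,x_1)]$, and therefore $\alpha([0,1]) = \Delta^{-1}([0,d(x,x_1)])$ and $\Delta$ restricted to this arc is a homeomorphism onto $[0,d(x,x_1)]$. In particular every point $x\in X$ lies on an arc from $x_1$ parametrised monotonically by $\Delta$, and $\Delta^{-1}\big([0,r]\big)$ is the image of this arc whenever $r=d(x,x_1)$ is attained.

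**The main step** is then to glue these arcs. Let $J = \Delta(X)$, an interval with left endpoint $0$; write its right endpoint as $L \in (0,+\infty]$. I claim $J$ contains every point of $[0,L)$: given $r \in [0,L)$, pick $x\in X$ with $\Delta(x) > r$ (possible since $r$ is not an upper bound for $J$), and the arc above from $x_1$ to $x$ meets $\Delta^{-1}(r)$ by the intermediate value theorem, so $r \in J$. Thus $J$ is either $[0,L]$ or $[0,L)$ (with $L$ possibly $+\infty$), i.e. $J$ is homeomorphic to $[0,1]$ or to $[0,+\infty)$. It remains to check that $\Delta : X \to J$ is a homeomorphism, i.e. that $\Delta^{-1}$ is continuous. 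This follows from the arc description: for any $r_0 \in J$, choose $x$ with $\Delta(x) \ge r_0$ and an arc $\alpha$ from $x_1$ to $x$ as above; then $\Delta^{-1}$ agrees near $r_0$ with $\alpha \circ (\Delta\circ\alpha)^{-1}$, which is continuous as a composition of continuous maps. Hence $\Delta$ is a homeomorphism of $X$ onto $J$, and $X$ is a Jordan arc.

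**The part I expect to be the main obstacle** is making the gluing argument fully rigorous — in particular verifying that the arcs from $x_1$ to different points $x$ are consistent (that for $\Delta(x') < \Delta(x)$ the arc to $x'$ is contained in, and is an initial segment of, the arc to $x$), which is what makes $\Delta^{-1}$ well-defined and continuous globally rather than just locally. This consistency is forced by injectivity of $\Delta$: two arcs from $x_1$, reparametrised by $\Delta$, must agree on the overlap of their $\Delta$-ranges, since at each height $r$ there is a unique point of $X$ with $\Delta$-value $r$ lying on either arc. Once this is spelled out, continuity of $\Delta^{-1}$ is immediate. One should also remark that $\dim_1(X) = 1$ is used precisely to get a \emph{single} co-ordinate $\Delta$, so that $J \subseteq \mathbb{R}$ is an interval rather than a more complicated planar set.
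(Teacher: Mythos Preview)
Your argument is correct and opens exactly as the paper does: take a one-point basis $\{x_1\}$, set $\Delta(x)=d(x,x_1)$, note that $\Delta$ is a $1$-Lipschitz injection, and use connectedness to see that $\Delta(X)$ is an interval $J$ with left endpoint $0$. The divergence is in how the homeomorphism is established. In the bounded case $J=[0,a]$ the paper picks a single curve $\gamma$ from $x_0$ to the (unique) point at distance $a$, observes via the intermediate value theorem and injectivity of $\Delta$ that $\gamma([0,1])$ is already all of $X$, deduces that $X$ is compact, and then \emph{invokes Theorem~\ref{160128a}} to conclude $X\cong[0,1]$; the unbounded case $J=[0,b)$ is handled by remarking that the same reasoning applies to each $[0,a]$ with $a<b$. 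You instead argue directly and uniformly in both cases: for each $x$ the composite $\Delta\circ\alpha$ is a continuous injection of $[0,1]$ into $\mathbb{R}$, hence strictly monotone, so $\Delta^{-1}$ on $[0,\Delta(x)]$ coincides with the continuous map $\alpha\circ(\Delta\circ\alpha)^{-1}$; letting $\Delta(x)\uparrow L$ gives continuity of $\Delta^{-1}$ on all of $J$. What your approach buys is self-containment (no appeal to Theorem~\ref{160128a}, which the paper imports from \cite{Sheng2013}) and a single argument covering both the bounded and unbounded cases; what the paper's approach buys is brevity, since the compact case is outsourced to a known result.
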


\begin{proof}
As ${\rm dim}_1(X)=1$, there is a metric basis, say $\{x_0\}$ for $X$,
and every point $x$ of $X$ is uniquely determined by its distance
$d(x,x_0)$ from $x_0$. Consider the map $\Delta:x \mapsto d(x,x_0)$ of
$X$ into $[0,+\infty)$. This map is (uniformly) continuous because
$$|\Delta(x)-\Delta(y)| = |d(x,x_0)-d(y,x_0)| \leq d(x,y),$$
and as $X$ is arcwise connected (and therefore connected), so
$\Delta(X)$ is connected. This means that $\Delta$ is an interval of
the form $[0,a]$, where $a>0$, or $[0,b)$, where $0 < b \leq +\infty$.

Let us consider the case when $\Delta(X)= [0,a]$. As $\Delta$ is
injective, we see that for every $r$ in the interval
$[0,a]$ there is some unique $x_r$ in $X$ with $d(x_r,x_0)=r$.
Thus $X = \{x_r:0 \leq r \leq a\}$. However, as $X$ is arcwise
connected, there is a curve, say $\gamma: [0,1]\to X$ with
$\gamma(0)=x_0$ and $\gamma(1)=x_a$. Now as $\gamma$ is continuous,
the set $\{d\big(\gamma(t),x_0\big): t\in [0,1]\}$ must
contain every real number in the interval $[0,a]$, and it cannot
contain any other numbers; thus $X = \gamma([0,1])$. Now
$\gamma([0,1])$ is compact for it is the continuous image of the
compact interval $[0,1]$; thus $X$ is compact and so, by Theorem
\ref{160128a}, $X$ is a Jordan arc.
 
The argument in the case when $\Delta(X) = [0,b)$ is similar. Indeed,
the argument above holds for every $a$ with $0<a<b$, and it is easy
to see that this implies that $\Delta$ is a homeomorphism from
$X$ to $[0,b)$.
\end{proof}


\section{Some examples}\label{someexamples}
In order to calculate the $k$-metric dimension of a metric space we
need to understand the geometric structure of its bisectors, and we
now illustrate this with several examples. In order to maintain
the flow of ideas, the details of these examples will be given later.

\begin{example} \label{151006a} {\rm 
Let $(X,d)$ be any one of the Euclidean, spherical and hyperbolic
spaces $\mathbb{R}^n$, $\mathbb{S}^n$ and $\mathbb{H}^n$,
respectively, each with the standard metric of constant curvature $0$,
$1$ and $-1$, respectively. The bisectors are well understood in 
these spaces, and we shall show that any non-empty open subset of $X$
has $k$-metric dimension $n+k$. In particular, each of these spaces
has dimension sequence $(n+1,n+2,n+3,\ldots)$. See
\cite{Hey14-230,Sheng2013} for the $1$-metric dimensions of these
spaces. 
} \end{example}

\begin{example}\label{exampleC}{\rm 
Let $X$ be any finite set with the discrete metric $d$ (equivalently,
$X$ is the vertex set of a complete, finite graph). For distinct $u$
and $v$ in $X$ we have $B(u|v)= X \backslash\{u,v\}$, so that for any
subset $S$ of $X$, we have
$B(u|v)^c \cap S = \{u,v\} \cap S$. Thus if $|S \cap B^c|\geq 1$ for
all bisectors $B$, then $S$ can omit at most one point of $X$. We
conclude that ${\rm dim}_1(X) = |X|-1$. If $|B^c\cap S| \geq 2$ for
all bisectors $B$ then $S=X$, and ${\rm dim}_2(X) = |X|$. We conclude
that $(X,d)$ has dimension sequence
$(|X|-1,|X|,+\infty,+\infty,\ldots)$.
}\end{example}

\begin{example}\label{exampleP} {\rm
Let $X$ be the real interval $[0,1]$, with the Euclidean metric.
Then $B$ is a bisectors if and only if $B = \{x\}$ for some $x$ in
$(0,1)$. Thus $\{0\}$ is a $1$-metric basis, and $\{0,1\}$ is a
$2$-metric basis, of $[0,1]$. We leave the reader to show that if $k
\geq 3$ then $\{0, \tfrac1k,\tfrac2k,\ldots,\tfrac{k-1}k, 1\}$
is a $k$-metric basis, so that $[0,1]$ has dimension sequence 
$(1,2,4,5,6,\ldots)$. A similar argument shows that $[0,+\infty)$ has
dimension sequence $(1,3,4,5,\ldots)$, and that $(-\infty,+\infty)$,
which is $\mathbb{R}$, has dimension sequence $(2,3,4,\ldots)$.
}\end{example}

\begin{example}\label{exampleE} {\rm 
The  Petersen graph, which is illustrated in Figure 1, has dimension
sequence $(3,4,7,8,9,10,+\infty,\ldots)$. The (finite) values
${\rm dim}_k(X)$ for $k=1,\ldots,6$ come from a computer search, and
as ${\rm dim}_6(X) = 10 = |X|$, we have ${\rm dim}_7(X) = +\infty$.  
}\end{example}

\begin{figure}[!ht]
\centering
\begin{tikzpicture}[transform shape, inner sep = .7mm]
\def\radius{0.8} 
\foreach \ind in {1,...,5}
{
\pgfmathparse{-54+360/5*\ind};
\node [draw=black, shape=circle, fill=white] (v\ind) at
(\pgfmathresult:\radius cm) {};
\node [draw=black, shape=circle, fill=white] (u\ind) at
(\pgfmathresult:\radius+2 cm) {};
}
\foreach \ind in {1,...,5}
 {\draw[black] (u\ind) -- (v\ind);}
\foreach \ind in {1,...,4}
  {
 \pgfmathparse{int(\ind+1)}
 \draw[black] (u\ind) -- (u\pgfmathresult);
}  \draw[black] (u5) -- (u1); \draw[black] (v1) --
(v3)--(v5)--(v2)--(v4)--(v1);
\end{tikzpicture}
\caption{The Petersen graph.}\label{figPetersen}
\end{figure}
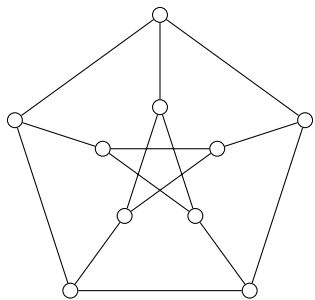

\begin{example}\label{exampleD}{\rm 
Let $G$ be a group with a given set of generators, let $V$ be the
vertex set of the associated Cayley graph of $G$, and let $d$ be its
graph metric.
\\ {\rm (i)}
If $G$ is an infinite cyclic group then $(V,d)$ has dimension sequence
$(2,3,4,\ldots)$.
\\ {\rm (ii)} 
If $G$ is a free group on $p$ generators, where $p\geq 2$, then
$(V,d)$ has dimension sequence $(+\infty,+\infty,+\infty,\ldots)$.
\\ {\rm (iii)} 
Let $G$ be an abelian group on $p$ generators, where $p\geq 2$,
and where each generator has infinite order. Then $(V,d)$ has
dimension sequence $(+\infty,+\infty,+\infty,\ldots)$.
}\end{example}


\section{Three geometries of constant curvature}
\label{SpacesOfConstantCurvature}
In this section we give the details of Example \ref{151006a}. 
It is shown in \cite{Sheng2013} that if $U$ is any non-empty, open
subset of any one of the three classical geometries $\mathbb{R}^n$,
$\mathbb{S}^n$ and $\mathbb{H}^n$, then $\dim_1(U)=n+1$. Here we show
that if $X$ is any of these spaces then $\dim_k(X)=n+k$ for
$k=1,2,\ldots$. The same result holds for non-empty \emph{open}
subsets of these spaces, and we leave the reader to make the
appropriate changes to the proofs. 

The proof that $\dim_k(X)=n+k$ when $X$ is one of the three geometries
$\mathbb{R}^n$, $\mathbb{S}^n$ and $\mathbb{H}^n$, is largely
independent of the choice of $X$, and depends only on the nature of
the bisectors in these geometries. Each of these three geometries has
the following properties:
\\ (P1) \quad ${\rm dim}_1(X) = n+1$;
\\ (P2) \quad there exists $x_1,x_2,\ldots$ in $X$ such that if
$j_1 < j_2< \cdots < j_n$ then $\{x_{j_1},\ldots,x_{j_n}\}$ lies 
on a unique bisector $B$, and no other $x_i$ lies on $B$.
\\ Now (P1) and (P2) imply that $\dim_k(X)=n+k$ for
$k=1,2,\ldots$. Indeed, (P2) implies that for any bisector $B$,
$|B \cap \{x_1,\ldots,x_{n+k}\}| \leq n$, so that
$|B^c \cap \{x_1,\ldots,x_{n+k}\}| \geq k$. 
This implies that ${\rm dim}_k(X) \leq n+k$. However, (P1) and Theorem
\ref{Monoty-AFB} show that ${\rm dim}_k(X) \geq n+k$. Since we know
that  each of $\mathbb{R}^n$, $\mathbb{S}^n$ and $\mathbb{H}^n$ has
the property (P1), it remains to show that they have the property
(P2), and this depends on the nature of the bisectors in these
geometries. We consider each in turn.

\vspace{3mm} \noindent {\bf Euclidean Space $\mathbb{R}^n$}\\
Each bisector in $\mathbb{R}^n$ is a hyperplane (that is, the
translation of an $(n-1)$-dimensional subspace of $\mathbb{R}^n$),
and each hyperplane is a bisector. Any set of $n$ points lies on a
bisector, and there exists sets of $n+1$ points that do not lie on 
any single bisector. The appropriate geometry here is the affine
geometry of $\mathbb{R}^n$, but we shall take a more informal view.
First, we choose $n$ points $x_1,\ldots,x_n$ that lie on a unique
hyperplane $H$. Next, we select a point $x_{n+1}$ not on $H$. Then
any $n$ points chosen from $\{x_1,\ldots,x_{n+1}\}$ lie on some
hyperplane $H'$, and the remaining point does not lie on $H'$. Now
suppose that we have constructed the set $\{x_1,\ldots,x_{n+p}\}$
with the property that any set of $n$ points chosen from this lie
on a unique hyperplane, say $H_\alpha$, and that no other $x_i$ lies
on $H_\alpha$. Then we can choose a point $x_{n+p+1}$ that is not not
on any of the $\binom{n+p}{n}$ hyperplanes $H_\alpha$, and it is then
easy to check that the sequence $x_1,x_2,\ldots$ has the property
(P2).

Although we have not used it, we mention that there is a formula for
the $n$-dimensional volume $V$ of the Euclidean simplex whose vertices
are the $n+1$ points $x_1,\ldots,x_{n+1}$ in $\mathbb{R}^n$, namely
$$V^2 = \frac{(-1)^{n+1}}{2^n(n\,!)^2}\,\Delta,$$
where $\Delta$ is the \emph{Cayley-Menger determinant} given by
$$ \Delta = \left |
\begin{matrix}
0 &1 &\cdots &1 \\
1 &d_{1,1}^2 &\cdots &d_{1,n+1}^2\\
\vdots &\vdots &\ddots &\vdots \\
1 &d_{n+1,1}^2 &\cdots &d_{n+1,n+1}^2\\
\end{matrix}
\right |,$$
and $d_{i,j} = \|x_i-x_j\|$. As $V=0$ precisely when the points $x_j$
lie on a hyperplane, we see that this condition could be used to
provide
an algebraic background to the discussion above. For more details,
see \cite{Ber87}, \cite{Blu53} and \cite{Blu70}.
We also mention that there are versions of the Cayley-Menger
determinant that are applicable to spherical, and to hyperbolic,
spaces.

\vspace{3mm} \noindent {\bf Spherical Space $\mathbb{S}^n$}\\ 
Spherical space $(\mathbb{S}^n,d)$ is the space 
$\{x \in \mathbb{R}^{n+1} :\; \| x \| = 1\}$ with the path metric
$d$ induced on $\mathbb{S}^n$ by the Euclidean metric on
$\mathbb{R}^{n+1}$. Explicitly, $\cos d(x,y) = x{\cdot}y$, where 
$x{\cdot}y$ is the usual scalar product in $\mathbb{R}^{n+1}$. 
If $u$ and $v$ are distinct points of $\mathbb{S}^n$, we let
$B^{\mathcal{E}}(u|v)$ be the \emph{Euclidean} bisector (in
$\mathbb{R}^{n+1}$) of $u$ and $v$, and $B^{\mathcal{S}}(u|v)$ the 
spherical bisector in the space $(\mathbb{S}^n,d)$. Then
$B^{\mathcal{E}}(u|v)$ is a hyperplane that passes through the origin
in $\mathbb{R}^{n+1}$, and 
\begin{equation}\label{151013c}
B^{\mathcal{S}}(u|v) = \mathbb{S}^n\cap B^{\mathcal{E}}(u|v).
\end{equation}
The bisectors $B^{\mathcal{S}}(u|v)$ are the \emph{great circles} (of
the appropriate dimension) on $\mathbb{S}^n$.

The equation \eqref{151013c} implies that the $k$-metric dimension of
the spherical spaces is the same as for Euclidean spaces. Indeed, our
proof for Euclidean spaces depended on constructing a sequence
$x_1,x_2,\ldots$ with the property (P2), and it is clear that this
construction could be carried out in such a way that each $x_j$ lies
on $\mathbb{S}^n$.

\vspace{3mm} \noindent {\bf Hyperbolic Space $\mathbb{H}^n$}\\ 
Our model of hyperbolic $n$-dimensional space is Poincare's half-space
model 
$$\mathbb{H}^n =
\{(x_1,\ldots,x_{n+1}\in \mathbb{R}^{n+1}:\; x_{n+1}>0\}$$ 
equipped with the hyperbolic distance $d$ which is derived from
Riemannian metric ${|dx|}/{x_{n+1}}$. For more details, see for
example, \cite{MR698777,MR2249478}. Our argument for $\mathbb{H}^n$ 
is essentially the same as for $\mathbb{R}^n$ and $\mathbb{S}^n$
because if $u$ and $v$ are distinct points in $\mathbb{H}^n$, then
the hyperbolic bisector $B(u|v)$ is the set $S \cap \mathbb{H}^n$,
where $S$ is some Euclidean sphere whose centre lies on the hyperplane
$x_n = 0$. We omit the details.


\section{The metric dimensions of graphs}
The vertex set $V$ of a graph $G$ supports a natural \emph{graph
metric} $d$, where $d(u,v)$ is the smallest number of edges that can
be used to join $u$ to $v$. Some basic results on the $k$-metric
dimension of a graph have recently been obtained in
\cite{Estrada-Moreno2013,Estrada-Moreno2013corona,Estrada-Moreno2014b,
Estrada-Moreno2014,Estrada-Moreno2014a}. Moreover, it was shown in
\cite{Yero2013c} that the problem of computing the $k$-metric
dimension of a graph is NP-complete. 
A natural problem in the study of the $k$-metric dimension of a metric
space $(X,d)$ consists of finding the largest integer $k$ such that
there exists a $k$-metric generator for $X$. For instance, for the
graph shown in Figure \ref{figDimk} the maximum value of $k$ is four. 
It was shown in \cite{Estrada-Moreno2013corona,Estrada-Moreno2014a}
that for any graph of order $n$ this problem has time complexity
of order ${O}(n^3)$. If we consider the discrete metric space 
$(X,d_0)$ (equivalently, a compete graph), then $\dim_1(X)=|X|-1$ and
$\dim_2(X)=|X|$.
Furthermore, for $k\ge 3$ there are no $k$-metric generators for $X$.
In general, for any metric space $(X,d)$, the whole space $X$ is a
$2$-metric generator, as two vertices are distinguished by themselves.
As we have already seen, there are metric spaces, like the Euclidean
space $\mathbb{R}^n$, where for any positive integer $k$, there exist
at least one $k$-metric generator. 

We shall now discuss the dimension sequences of the simplest connected graphs,
namely paths and cycles (and we omit the elementary details).

A finite \emph{path} $P_n$ is a graph with vertices $v_1,\ldots,v_n$,
edges $[v_1,v_2],\ldots,[v_{n-1},v_n]$, and bisectors
$\{v_2\},\ldots,\{v_{n-1}\}$. We leave the reader to show that $P_n$
has dimension sequence  
$$
\begin{cases}
(1,2,+\infty,\ldots) &\text{if $n=2,3$;}\\
(1,2,4,5,\ldots,n,+\infty,\ldots) &\text{if $n \geq 4$.}
\end{cases}$$
A \emph{semi-infinite path} $P_{\mathbb{N}}$ is a graph with vertices
$v_1,v_2\ldots$, edges $[v_1,v_2], [v_2,v_3],\ldots$, and bisectors
$\{v_2\},\ldots$. Thus $P_{\mathbb{N}}$ has dimension sequence 
$(1,3,4,5,\ldots)$.
A \emph{doubly-infinite path} $P_{\mathbb{Z}}$ is the graph with
vertices $\ldots,v_{-1},v_0,v_1,\ldots$, edges
$\ldots,[v_{-1},v_0],[v_0,v_1],\ldots$, and bisectors 
$\ldots,\{v_{-1}\},\{v_0\},\{v_1\},\ldots$. Thus $P_{\mathbb{Z}}$
has dimension sequence $(2,3,4,5,\ldots)$. We note that a graph $G$
has $1$-metric dimension $1$ if and only it is $P_n$ or
$P_{\mathbb{N}}$ \cite{Khuller1996,Chartrand2000}. This, together with
the results just stated, show that if $G$ is a graph of order two or
more, and $k\geq 2$, then $\dim_k(G)=k$ if and only if $G$ is $P_n$
and $k=2$ (see also \cite{Estrada-Moreno2013}).

We now consider cycles. A cycle $C_n$ is a graph with vertices
$v_1,\ldots, v_n$, and edges $\{v_1,v_2\}, \ldots, \{v_{n-1},v_n\},
\{v_n,v_1\}$. We must distinguish between the cases where $n$ is even,
and where $n$ is odd (which is the easier of the two cases) and, as
typical examples, we mention that $C_7$ has dimension sequence
$(2,3,\ldots,7,+\infty,\ldots)$, and $C_8$ has
dimension sequence $(2,3,4,6,7,8,+\infty,\ldots)$. Suppose that $n$ is
odd; then the bisectors are the singletons $\{v\}$. Thus if $S$ is a
set of $k+1$ vertices, where $k+1 \leq n$, then $|B^c\cap S| \geq k$
for every bisector $B$. Thus if $n$ is odd, then ${\rm dim}_k(C_n) =
k+1$, and $C_n$ has dimension sequence
$(2,3,\ldots,n,+\infty,\ldots)$.

We now show that $C_{2q}$ has dimension sequence
$$(2,3,\ldots,q, q+2,q+3,\ldots,q+q,+\infty,\ldots).$$ 
To see this,
label the vertices as $v_j$, where $j\in \mathbb{Z}$, and where
$v_i = v_j$ if and only if $i\equiv j \pmod{n}$. The vertices
$v_i$ and $v_j$ are \emph{antipodal vertices} if and only if 
$i-j \equiv q \pmod{2q}$; thus $v_j$ and $v_{j+q}$ are antipodal
vertices.  The class of bisectors is the class of sets $\{v,v^*\}$,
where $v$ is a vertex, and $v^*$ is the vertex that is antipodal to
$v$. For $k = 1,\ldots,q-1$ we can take a set of $k+1$ points, no
two of which are antipodal, as a $k$-metric basis, so that ${\rm
dim}_k(C_{2q}) = k+1$ for $k=1,\ldots,q-1$. To find ${\rm
dim}_q(C_{2q})$, we need to take (for a $q$-metric basis)
a set $S$ which contains two pairs of antipodal points, and one more
point from each pair of the remining antipodal pairs. We leave the
details to the reader.

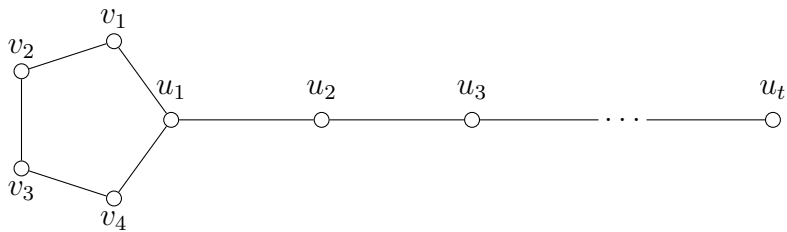
\begin{figure}[!ht]
\centering
\begin{tikzpicture}[transform shape, inner sep = .7mm]
\def\radius{1.1} 
\foreach \ind in {1,...,4}
{
\pgfmathparse{360/5*\ind};
\node [draw=black, shape=circle, fill=white] (v\ind) at
(\pgfmathresult:\radius cm) {};
\ifthenelse{\ind=3\OR \ind=4}
{
\node [scale=1] at ([yshift=-.3 cm]v\ind) {$v_\ind$};
}
{
\node [scale=1] at ([yshift=.3 cm]v\ind) {$v_\ind$};
};
}
\foreach \ind in {1,...,3}
\pgfmathparse{int(\ind+1)}
\draw[black] (v\ind) -- (v\pgfmathresult);
\foreach \ind in {1,...,3}
{
\pgfmathparse{\radius + 2*(\ind-1)};
\node [draw=black, shape=circle, fill=white] (u\ind) at
(\pgfmathresult cm, 0) {};
\node [scale=1] at ([yshift=.4 cm]u\ind) {$u_\ind$};
}
\pgfmathparse{\radius + 2*3};
\node (ldot) at (\pgfmathresult cm, 0) {$\ldots$};
\pgfmathparse{\radius + 2*4};
\node [draw=black, shape=circle, fill=white] (ut) at (\pgfmathresult
cm, 0) {};
\node [scale=1] at ([yshift=.4 cm]ut) {$u_t$};
\foreach \ind in {1,4}
\draw[black] (u1) -- (v\ind);
\foreach \ind in {1,2}
\pgfmathparse{int(\ind+1)}
\draw[black] (u\ind) -- (u\pgfmathresult);
\draw[black] (u3) -- (ldot);
\draw[black] (ldot) -- (ut);
\end{tikzpicture}
\caption{For $k\in \{1,2,3,4\}$, $\dim_k(G)=k+1$.}\label{figDimk}
\end{figure}

As an example which joins a path to a cycle, consider the graph $G$
illustrated in Figure
\ref{figDimk} which is obtained from the cycle graph $C_5$ and the
path $P_t$, by identifying one of the vertices of the cycle, say
$u_1$, and one of the end vertices of $P_t$. Let $S_1 =
\{v_1,v_2\}$, $S_2 = \{v_1,v_2,u_t\}$, $S_3 = \{v_1,v_2,v_3,u_t\}$ 
and $S_4 = \{v_1,v_2,v_3,v_4,u_t\}$. Then, for $k = 1,2,3,4$, the set
$S_k$ is $k$-metric basis of $G$.

The following lemma is useful when discussing examples in
graph theory.

\begin{lemma}\label{151123b}
Suppose that a graph $G$ does not have any cycles of odd length.
Then $B(u|v) = \varnothing$ when $d(u,v)$ is odd.
\end{lemma}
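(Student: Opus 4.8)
The plan is to argue by contradiction using parity. Suppose $G$ is bipartite (equivalently, has no odd cycles), and suppose $d(u,v)$ is odd but $B(u|v)\neq\varnothing$; pick $x\in B(u|v)$, so that $d(x,u)=d(x,v)=:m$. The idea is to use the vertex $x$ to close up a walk between $u$ and $v$ of odd length, contradicting bipartiteness.

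First I would recall the standard fact that a graph is bipartite if and only if it has no odd cycles if and only if every closed walk has even length; equivalently, in a bipartite graph with parts $P_0,P_1$, any walk between two vertices in the same part has even length and any walk between vertices in different parts has odd length, so in particular $d(u,v)$ has the same parity as the combinatorial ``colour distance'' between $u$ and $v$. Concretely: fix a spanning connected component (we may assume $u,v,x$ all lie in one component, since otherwise $d$ is infinite and the hypothesis $d(u,v)$ odd is vacuous), two-colour it, and let $c(w)\in\{0,1\}$ denote the colour of $w$. Then for any walk of length $\ell$ from $a$ to $b$ we have $\ell\equiv c(a)+c(b)\pmod 2$.

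The key step: take a geodesic from $x$ to $u$ (length $m$) and a geodesic from $x$ to $v$ (length $m$), and concatenate the reverse of the first with the second to get a walk from $u$ to $v$ of length $2m$, which is even. Hence $c(u)+c(v)\equiv 2m\equiv 0\pmod 2$, i.e.\ $c(u)=c(v)$. But then any geodesic from $u$ to $v$, having length $d(u,v)$, satisfies $d(u,v)\equiv c(u)+c(v)=0\pmod 2$, so $d(u,v)$ is even, contradicting the assumption that $d(u,v)$ is odd. Therefore no such $x$ exists, i.e.\ $B(u|v)=\varnothing$ whenever $d(u,v)$ is odd.

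I do not expect any real obstacle here; the only points requiring a word of care are (i) the reduction to a single connected component, so that the two-colouring $c$ is well defined and the parity-of-walk-length fact applies, and (ii) making sure the equivalence ``no odd cycles $\Longleftrightarrow$ bipartite $\Longleftrightarrow$ walk lengths respect the colouring'' is invoked cleanly rather than re-proved. Everything else is the one-line concatenation argument above. An alternative phrasing avoiding colourings entirely: if $d(u,v)$ is odd and $x\in B(u|v)$, then a shortest $u$–$x$ path followed by a shortest $x$–$v$ path followed by a shortest $v$–$u$ path is a closed walk of length $2m+d(u,v)$, which is odd; but every closed walk in a graph with no odd cycles has even length, a contradiction. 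This is perhaps the cleanest route and is the one I would write up.
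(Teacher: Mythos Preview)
Your proposal is correct and takes essentially the same approach as the paper: the paper's one-line proof is precisely your ``alternative phrasing'' at the end, observing that a point $x\in B(u|v)$ yields a closed walk $u\to x\to v\to u$ of odd length $2m+d(u,v)$, contradicting the absence of odd cycles. Your version is in fact slightly more careful, since you distinguish a closed \emph{walk} from a cycle and note (implicitly) that an odd closed walk forces an odd cycle; the paper simply calls the concatenation a ``cycle''.
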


The proof is trivial for if $x\in B(u|v)$ then there is a cycle of
odd length (from $u$ to $x$, then to $v$, and then back to $u$).
This lemma applies, for example, to the usual grid (or graph) in
$\mathbb{R}^n$ whose vertex set is $\mathbb{Z}^n$. A \emph{bipartite
graph} is a graph $G$ whose vertex set $V$ splits into complementary
sets $V_1$ and $V_2$ such that each of the edges of $G$ join a
point of $V_1$ to a point of $V_2$. As a graph is bipartite
if and only if it has no cycles of an odd length, this lemma is
about bipartite graphs.

\begin{example}\label{tree}{\rm 
Let us now consider a graph $G$ that is an infinite tree in which
every vertex has degree at least three. Now let $v$ be any vertex,
select three edges from $v$, say $[v,a]$, $[v,b]$ and $[v,c]$. As $G$
is a tree, if we remove one edge the remaining graph is disconnected. 
Now let $G_c$ be the subgraph of $G$ that would be the component
containing $c$ if we were to remove the edge $[v,c]$ from $G$.
 It is clear that if $u$ is a vertex in $G_c$, then
$d(a,u)=d(b,u)$ since any path from $a$ (or $b$) to $u$ must pass
through the edge $[v,c]$. We conclude that $G_c \subset B(a|b)$.
It is now clear from Theorem \ref{InfinityDimension} that $G$ has
dimension sequence $(+\infty,+\infty,\ldots)$.  
}\end{example}

For the rest of this section we shall consider the Cayley graph of a
group with a given set of generators as a metric space. Let $G$ be a
group and let $G_0$ a set of generators of $G$. We shall always
assume that if $g\in G_0$ then $g^{-1}\in G_0$ also. Then the Cayley
graph of the pair $(G,G_0)$ is a graph whose vertex set is $G$, and
such that the pair $(g_1,g_2)$ is an edge if and only if $g_2=g_0g_1$
for some $g_0$ in $G_0$. Thus, for example, $P_{\mathbb{Z}}$ is the
Cayley graph of an infinite cyclic group (on one generator), and $C_n$
is the Cayley graph of an finite cyclic group (on one generator).
We shall always assume that the set $G_0$ of generators of $G$ is
finite; then the Cayley graph is locally finite (that is, each
vertex is the endpoint of only finitely many edges). Note also that
if a generator $g_0$ has order two then $g_0^{-1}=g_0$ so this
only provides one edge (not two edges) from each vertex. The
following result, which characterises Cayley graphs within the class
of all graphs, is well known. 

\begin{theorem}\label{Cayley}
A graph is a Cayley graph of a group $G$ if and only if it admits a
simply transitive action of $G$ by graph automorphisms.
\end{theorem}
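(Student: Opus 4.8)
The plan is to establish the two implications separately; the forward one is a routine verification, and the converse is the classical theorem of Sabidussi, namely the reconstruction of a Cayley graph from a regular automorphism group. \emph{Forward direction.} Suppose $\Gamma$ is the Cayley graph of $(G,G_0)$, so its vertex set is $G$ and $\{g_1,g_2\}$ is an edge exactly when $g_2g_1^{-1}\in G_0$; this relation is symmetric because $G_0=G_0^{-1}$ and loop-free because $e\notin G_0$. For $h\in G$ let $\alpha_h\colon g\mapsto gh^{-1}$. I would then check that $h\mapsto\alpha_h$ is an action of $G$ on the vertex set (since $\alpha_{h_1h_2}=\alpha_{h_1}\alpha_{h_2}$), that each $\alpha_h$ is a graph automorphism (because $(g_2h^{-1})(g_1h^{-1})^{-1}=g_2g_1^{-1}$, so it preserves the edge condition), and that the action is simply transitive (given $g_1,g_2$, the unique $h$ with $\alpha_h(g_1)=g_2$ is $h=g_2^{-1}g_1$). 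That settles this direction.

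\emph{Converse.} Suppose $\Gamma=(V,E)$ admits a simply transitive action $(h,v)\mapsto h\cdot v$ of $G$ by graph automorphisms. I would fix a base vertex $v_0$ and set $\phi\colon G\to V$ by $\phi(g)=g^{-1}\cdot v_0$; since the action is transitive with trivial point stabilisers, $\phi$ is a bijection, and a short computation shows it intertwines the given action with the maps $\alpha_h$ above, namely $h\cdot\phi(g)=\phi(\alpha_h(g))$. Transporting the edge relation of $\Gamma$ through $\phi$ to a relation $\sim$ on $G$, the hypothesis that $G$ acts by automorphisms says exactly that $\sim$ is invariant under every $\alpha_h$, while $\sim$ is symmetric and loop-free because $E$ is. Putting $G_0:=\{t\in G:e\sim t\}$, invariance under $\alpha_{g_1}$ gives $g_1\sim g_2\iff e\sim g_2g_1^{-1}\iff g_2g_1^{-1}\in G_0$, symmetry of $\sim$ forces $G_0=G_0^{-1}$, and loop-freeness forces $e\notin G_0$. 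Hence $(G,\sim)$ is precisely the Cayley graph of $(G,G_0)$ and $\phi$ is an isomorphism of it onto $\Gamma$. Finally, if $\Gamma$ is connected then the subset $\langle G_0\rangle$ of $G$ is nonempty and closed under passing to $\sim$-neighbours, so it is all of $G$; thus $G_0$ generates $G$ and $\Gamma$ is a Cayley graph of $G$ in the sense of the definition used here.

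I do not expect any genuine obstacle; the only place to take care is the left/right bookkeeping, which is the reason for choosing the base-point map to be $g\mapsto g^{-1}\cdot v_0$ rather than $g\mapsto g\cdot v_0$ — with that choice the recovered edge relation appears directly in the convention $g_2g_1^{-1}\in G_0$ used to define Cayley graphs in this paper, so no relabelling by inversion is needed. A secondary point worth being explicit about is whether ``Cayley graph'' is meant to require a generating connection set: the last sentence of the converse supplies this when $\Gamma$ is connected, whereas for a possibly disconnected $\Gamma$ one lands on the slightly more general form of Sabidussi's theorem in which the connection set $G_0$ need not generate $G$.
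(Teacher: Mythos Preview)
The paper does not prove this theorem at all: it is introduced with the phrase ``The following result, which characterises Cayley graphs within the class of all graphs, is well known'' and is then stated without proof, serving only as background motivation for the subsequent discussion of Cayley graphs. Your proposal, by contrast, supplies a complete and correct proof --- it is the standard argument for Sabidussi's theorem, with the left/right conventions carefully matched to the paper's edge relation $g_2g_1^{-1}\in G_0$. So there is nothing to compare against; you have gone beyond what the paper does, and your added remarks on the connectedness hypothesis and the generating property of $G_0$ are welcome clarifications of a point the paper leaves implicit.
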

 
Theorem \ref{Cayley} suggests that if we use the homogeneity implied
by this result there is a reasonable chance of finding the dimension
sequence of a Cayley graph. However, for a graph that is not the
Cayley graph of a group, it seems that we are reduced to finding its
metric dimensions by a case by case analysis.

We shall now verify the claims made in Example \ref{exampleD}. First,
suppose that $G$ is a free group on $p$ generators. Then the Cayley
graph of $G$ is a tree in which every vertex has degree $2p$; thus,
using Example \ref{tree}, we see that $G$ has dimension sequence
$(+\infty,+\infty,+\infty,\ldots)$.

Next, we consider an abelian group $G$ on two generators of infinite
order (the proof for $p$ generators is entirely similar). The Cayley
graph of $G$ has $\mathbb{Z}^2$ as its vertex set and (if we identify
the lattice point $(m,n)$ with the Gaussian integer $m+in$) edges
$[m+in,m+1+in]$ and $[m+in, m+i(n+1)]$, where $m,n \in \mathbb{Z}$.
It is (geometrically) clear that for any $m\in \mathbb{Z}$ we have,
with $\zeta = m+im$, $$B(\zeta +1|\zeta +i) \supset \{p+iq: p\ge m+1,
q\ge m+1\}.$$
It now follows from Theorem \ref{InfinityDimension} (by taking $|m|$
large and $m$ negative) that $G$ has dimension sequence
$(+\infty,+\infty,\ldots)$.

In contrast to Example \ref{exampleD} we have the following result for
the infinite dihedral group whose Cayley graph is an infinite
ladder; for example we can take the group generated by the two
Euclidean isometries which, in complex terms, are $z \mapsto z+1$ and
$z \mapsto \bar z$.

\begin{theorem}
The infinite dihedral group has dimension sequence
$(3,4,6,8,\ldots)$.
\end{theorem}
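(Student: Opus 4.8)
The Cayley graph described is the infinite ladder: the vertex set is $\mathbb{Z}\times\{0,1\}$, with ``rung'' edges $[(m,0),(m,1)]$ and ``rail'' edges $[(m,\varepsilon),(m+1,\varepsilon)]$ for $\varepsilon\in\{0,1\}$. The first task is to compute the graph metric explicitly: $d\big((m,\varepsilon),(m',\varepsilon')\big) = |m-m'| + [\varepsilon\neq\varepsilon']$. With this in hand, the plan is to determine \emph{all} bisectors $B(u|v)$, since by \eqref{151123c} the whole dimension sequence depends only on the class $\mathcal{B}$ of bisectors. I would do a short case analysis on $u=(a,\alpha)$, $v=(b,\beta)$: when $\alpha=\beta$ (say both on rail $0$) the set $d(x,u)=d(x,v)$ forces the first coordinate of $x$ to be midway between $a$ and $b$, so $B$ is either empty (if $a\not\equiv b\bmod 2$) or a single rung $\{(c,0),(c,1)\}$ with $c=(a+b)/2$; when $\alpha\neq\beta$, one works out that $B(u|v)$ is empty unless $a=b$, in which case — since the ladder has no odd cycles and $d(u,v)=1$ is odd — Lemma \ref{151123b} already gives $B(u|v)=\varnothing$. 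The conclusion should be that the bisectors of the ladder are exactly the rungs $R_c=\{(c,0),(c,1)\}$, $c\in\mathbb{Z}$, and nothing else.

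**Reducing to a covering problem.** Once the bisectors are known to be the rungs $\{R_c\}_{c\in\mathbb{Z}}$, the condition ``$|B^c\cap A|\ge k$ for every bisector $B$'' becomes: for every $c$, the set $A$ contains at least $k$ vertices lying off the rung $R_c$, i.e. $|A| - |A\cap R_c|\ge k$, equivalently $|A\cap R_c|\le |A|-k$ for all $c$. Since each rung has only two vertices, $|A\cap R_c|\in\{0,1,2\}$. So a finite $A$ with $|A|=s$ is a $k$-metric generator precisely when $|A|-k\ge \max_c |A\cap R_c|$. If $A$ meets some rung in $2$ points we need $s\ge k+2$; if $A$ meets every rung in at most $1$ point we need $s\ge k+1$, but an $A$ that is a partial transversal (at most one vertex per rung) of size $s$ exists for every $s$. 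Hence $\dim_k(\text{ladder}) = k+1$ \emph{unless} every $(k+1)$-subset of vertices is forced to double up on some rung — which never happens, there are infinitely many rungs. Wait: that would give dimension sequence $(2,3,4,5,\ldots)$, contradicting the claimed $(3,4,6,8,\ldots)$, so I have mis-identified the bisectors and must redo the case analysis — in particular the case $\alpha\neq\beta$, $a\neq b$ will produce further, genuinely larger bisectors (``staircase'' sets), and also the $1$-metric dimension being $3$ shows no single vertex and no pair avoids all bisectors.

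**Corrected bisector analysis.** The real work, then, is a careful recomputation of $B(u|v)$ when $u$ and $v$ lie on different rails and different columns, say $u=(0,0)$, $v=(b,1)$ with $b>0$: solving $|x_1|+[\varepsilon=1]=|x_1-b|+[\varepsilon=0]$ over $x=(x_1,\varepsilon)$ gives, after splitting on the sign of $x_1$ and of $x_1-b$, an infinite ``L-shaped'' or ``staircase'' bisector — e.g. it will contain $(x_1,0)$ for $x_1\ge$ some bound together with $(x_1,1)$ for $x_1\le$ some bound. These large bisectors are what push $\dim_1$ up to $3$ and distort the higher terms. After cataloguing all bisector \emph{shapes} (finitely many families up to the $\mathbb{Z}\rtimes\mathbb{Z}/2$ symmetry group acting by translations and the rail-flip), I would, for each $k$, (i) produce an explicit $k$-metric generator of size $3,4,6,8,\ldots$ respectively — the values suggest $\dim_1=3$, $\dim_2=4$, and $\dim_k=2k$ for $k\ge 3$, so for $k\ge 3$ one likely takes $k$ rungs, i.e. $2k$ vertices, and checks every bisector misses at least $k$ of them — and (ii) prove a matching lower bound. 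The lower bound for $k\ge 3$ is the crux: I expect to argue that because the ``staircase'' bisectors can contain all but boundedly many vertices on one rail, any set avoiding $k$ vertices of every such bisector must be ``spread across both rails'' in a balanced way, forcing $|A|\ge 2k$; Theorem \ref{Monoty-AFB} handles the increments $\dim_k+1\le\dim_{k+1}$ and pins down the small cases $k=1,2$ once $\dim_1=3$ is established. The main obstacle is organizing the staircase-bisector bookkeeping cleanly enough that the lower bound $\dim_k\ge 2k$ (for $k\ge 3$) is transparent rather than a morass of inequalities; a good ``potential function'' counting, for a candidate $A$, the deficiency $\min_c(|A|-|A\cap(\text{bisector through column }c)|)$ is what I would hunt for to make that step short.
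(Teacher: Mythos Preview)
Your corrected approach is the paper's approach: the key bisector you call a ``staircase'' is exactly the one the paper writes down, namely $B(0\mid 1+i)=\{1,2,3,\ldots\}\cup\{i,i-1,i-2,\ldots\}$, and the paper confirms that every other bisector is empty or has cardinality two. The paper then simply exhibits the bases $\{0,1,i\}$, $\{0,1,i,1+i\}$, and $\{0,1,2,i,1+i,2+i\}$ for $k=1,2,3$ (the last being your ``$k$ rungs'' for $k=3$) and explicitly leaves the remainder to the reader, so the details you outline are precisely what is being asked of you.

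One remark that removes the need for your ``potential function'' hunt. For each $m\in\mathbb{Z}$ the two staircase bisectors $B(m\mid m{+}1{+}i)$ and $B(m{+}i\mid m{+}1)$ are \emph{complementary}: together they partition the whole vertex set. Hence any $k$-metric generator $A$ must meet each of them in at least $k$ points, and $|A|\ge 2k$ follows immediately. This already gives $\dim_k\ge 2k$ for every $k$; for $k=1$ it only yields $\dim_1\ge 2$, so you still need the short separate check that every two-element set lies in some bisector (a staircase or a rung) to reach $\dim_1\ge 3$, after which Theorem~\ref{Monoty-AFB} pins down $\dim_2=4$ as you say.
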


\begin{proof}
We may assume that (in complex terms) the vertices of the ladder
graph are the points $m+in$, where $m\in \mathbb{Z}$ and $n = 0,1$.
The key to computing the metric dimensions of the ladder graph is the
observation that 
$$B(0|1+i) = \{1,2,3,\ldots\} \cup \{i, i-1,i-2,\ldots \}.$$
Of course, similar bisectors arise at other pairs of similarly
located points; equivalently, each automorphism of the graph maps a
bisector to a bisector. All other bisectors are either empty or of
cardinality two. We claim that $\{0,1,i\}$ is a $1$-metric basis for
the graph so that ${\rm dim}_1(G) = 3$.  Next, it is easy to see that
$\{0,1,i,1+i\}$ is a $2$-metric basis for $X$ so that ${\rm dim}_2(X)
= 4$. The set $\{0,1,2,i,1+i,2+i\}$ is a $3$-metric basis so that
${\rm dim}_3(X)=6$. We leave the details, and the remainder of the
proof to the reader.
\end{proof}


\section{The join of metric spaces}
The $k$-metric dimension of the \emph{join} $G_1+G_2$ of two finite
graphs $G_1$ and $G_2$ was studied in \cite{Estrada-Moreno2014a}.
Let us briefly recall the notion of the join of two graphs $G_1$ and
$G_2$ with disjoint vertex sets $V_1$ and $V_2$, respectively. The
\emph{join} $G_1+G_2$ of $G_1$ and $G_2$ is the graph whose
vertex set is $V_1 \cup V_2$, and whose edges are the edges in $G_1$,
the edges in $G_2$, together with all edges obtained by joining each
point in $V_1$ to each point in $V_2$. Let $d_1$, $d_2$ and $d$ be
the graph metrics of $G_1$, $G_2$ and $G_1+G_2$, respectively; then
\begin{equation*}
d(u,v) = 
\begin{cases}
\min\{d_1(u,v),2\} &\text{if \ $u,v\in V_1$}; \\
\min\{d_2(u,v), 2\} &\text{if \ $u,v\in V_2$};\\ 
1 &\text{if $u\in X_i$, $v\in X_j$, where $i\neq j$},
\end{cases}
\end{equation*} 
because if $u,v\in V_1$, say, then for $w$ in $V_2$, we have
$d_1(u,v) \leq d(u,w)+d(w,v)=2$.

The join of two metric spaces is defined in a similar way, but
before we do this we recall that if $(X,d)$ is a metric space, and
$t>0$, then $d^t$, defined by 
\begin{equation*}
d^t(x,y) = \min\{d(x,y),2t\},
\end{equation*}
is a metric on $X$. If $d(x,y)<2t$ then $d^t(x,y) = d(x,y)$, so that
the $d^t$-metric topology coincides with the $d$-metric topology on
$X$. As the metric $d^t$ will appear in our definition of the join,
we first show how the metric dimension of a single metric space varies
when we distort the metric from $d$ to $d^t$ as above. From now on, the $k$-metric dimension of $(X,d^t)$ will be denoted by $\dim_k^t(X)$.

\begin{theorem}\label{151018b}
Let $(X,d)$ be a metric space, and $k$ a positive integer, and
suppose that $0 < s < t$. Then
${\rm dim}^s_k(X) \geq {\rm dim}^t_k(X) \geq {\rm dim}_k(X)$.
However, it can happen that
\begin{equation}\label{160128b}
\lim_{t\to +\infty}\ {\rm dim}^t_k(X) > {\rm dim}_k(X).
\end{equation} 
\end{theorem}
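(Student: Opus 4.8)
The plan is to reduce everything to the bisector characterisation \eqref{151123c}, so the first step is to describe the bisectors of $(X,d^t)$. For distinct $u,v$ and $x\in X$, since $d^t(x,u)=\min\{d(x,u),2t\}$, a short case check shows that $d^t(x,u)=d^t(x,v)$ holds precisely when $d(x,u)=d(x,v)$, or else $d(x,u)\ge 2t$ and $d(x,v)\ge 2t$ (the mixed case $d(x,u)<2t\le d(x,v)$ would force $d(x,u)=2t$). Writing $B^t(u|v)$ for the bisector in $(X,d^t)$, this gives
$$B^t(u|v)=B(u|v)\cup E^t(u|v),\qquad E^t(u|v)=\{x\in X:\ d(x,u)\ge 2t\ \text{and}\ d(x,v)\ge 2t\},$$
and since $2t$ grows with $t$, for $0<s<t$ we get $B(u|v)\subseteq B^t(u|v)\subseteq B^s(u|v)$, hence $B^s(u|v)^c\subseteq B^t(u|v)^c\subseteq B(u|v)^c$.

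Granting this, the chain ${\rm dim}^s_k(X)\ge {\rm dim}^t_k(X)\ge {\rm dim}_k(X)$ follows at once. If $A$ is a $k$-metric generator for $(X,d^s)$, then for every bisector $B^t(u|v)$ of $(X,d^t)$ we have $|B^t(u|v)^c\cap A|\ge |B^s(u|v)^c\cap A|\ge k$, so $A$ is also a $k$-metric generator for $(X,d^t)$; hence ${\rm dim}^t_k(X)\le {\rm dim}^s_k(X)$ (trivially so when the right side is $+\infty$). Applying the same reasoning to the inclusion $B(u|v)\subseteq B^t(u|v)$ gives ${\rm dim}_k(X)\le {\rm dim}^t_k(X)$.

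For the strict inequality \eqref{160128b} I would produce one example, the moral being that truncating an unbounded metric destroys every finite resolving set. Take $X=\mathbb{R}$ with the Euclidean metric and $k=1$; by Example \ref{exampleP}, ${\rm dim}_1(\mathbb{R})=2$. Fix $t>0$, let $A\subseteq\mathbb{R}$ be finite with $M=\max A$, and set $u=M+2t+1$, $v=u+1$: every $a\in A$ satisfies $d(a,u)>2t$ and $d(a,v)>2t$, so $d^t(a,u)=d^t(a,v)=2t$ and hence $A\subseteq B^t(u|v)$. Thus every finite subset of $\mathbb{R}$ lies in a bisector of $(\mathbb{R},d^t)$, and Theorem \ref{InfinityDimension} gives ${\rm dim}^t_1(\mathbb{R})=+\infty$ for every $t>0$; therefore $\lim_{t\to+\infty}{\rm dim}^t_1(\mathbb{R})=+\infty>2={\rm dim}_1(\mathbb{R})$. (The doubly-infinite path $P_{\mathbb{Z}}$, also of $1$-metric dimension $2$, works identically.)

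The bisector bookkeeping and the monotonicity deduction are routine; the one genuine idea, and the main obstacle, is the example — spotting that in an unbounded space the bisector of two sufficiently distant points $u,v$ swallows any prescribed finite set, so that $d^t$-resolving collapses even though $d$-resolving does not. It is worth remarking that $d^t$ and $d$ induce the same topology, so the jump in \eqref{160128b} is invisible topologically, and that for bounded $X$ one has $d^t=d$ for all large $t$; the escape to $+\infty$ above is thus the characteristic way \eqref{160128b} can arise.
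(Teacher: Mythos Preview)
Your proof is correct and follows essentially the same approach as the paper: the paper isolates the bisector inclusion $B(u|v)\subset B^t(u|v)\subset B^s(u|v)$ as a separate lemma (proved via the same case analysis you give), deduces monotonicity exactly as you do, and then exhibits the same example $X=\mathbb{R}$ with points chosen far to the right of any given finite set. Your explicit decomposition $B^t(u|v)=B(u|v)\cup E^t(u|v)$ is a slightly sharper statement than the paper's lemma, but the content is identical.
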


The join of two metric spaces is defined in a similar way to the
join of two graphs, and to motivate this, suppose that $(X,d)$ is a
metric space, and that $X_1$ and $X_2$ are bounded subsets $X$ whose
distance apart is very large compared with their diameters. Then, in
some sense, we can approximate the metric space $(X_1\cup X_2,d)$ by
replacing all values $d(x_1,x_2)$, where $x_j\in X_j$, by $t$, where
$t$ is some sort of average of the values $d(x_1,x_2)$.  We shall
now define the join, so suppose that $(X_1,d_1)$ and $(X_2,d_2)$
are metric spaces, with $X_1\cap X_2 = \varnothing$, and $t>0$. Then
the \emph{join} of $(X_1,d_1)$ and $(X_2,d_2)$ (relative to the
parameter $t$) is the metric space $(X_1\cup X_2, d^t)$, where 
\begin{equation*}
d^t(u,v) = 
\begin{cases}
d^t_1(u,v) &\text{if \ $u,v\in X_1$}; \\
d^t_2(u,v) &\text{if \ $u,v\in X_2$};\\ 
t &\text{if $u\in X_i$ and $v\in X_j$, where $i\neq j$}.
\end{cases}
\end{equation*} 
As with graphs, $X_1+X_2$ always represents the metric space $(X_1\cup
X_2,d^t)$, where in this case $t$ will be understood from the context.

We might hope that the metric dimension is additive with respect to
the join, but unfortunately it is not. Let $X_1 = \{1,3\}$ and $X_2 =
\{2,4\}$, each with the Euclidean metric, and let $t=1$. Then 
$X_1\cup X_2 = \{1,2,3,4\}$ with the metric $d^1$, where $d^1(1,3)=d^1(2,4)=2$
and, for all other $x$ and $y$, $d^1(x,y)=1$. The bisectors in $X_1+X_2$
are $X_1$, $X_2$ and $\varnothing$, and from this we conclude that
${\rm dim}_1^1(X_1+X_2) = 3$. Obviously, ${\rm dim}_1(X_1) = {\rm
dim}_1(X_2) = 1$, so that in this case, 
${\rm dim}_1(X_1) + {\rm dim}_1(X_2) < {\rm dim}_1^1(X_1+X_2).$

We now give some inequalities which hold for the join of two metric
spaces. 

\begin{theorem}\label{160119a}
Let $(X_j,d_j)$, $j=1,2$, be metric spaces with $X_1\cap X_2 =
\varnothing$, and consider the join $(X_1\cup X_2,d^t)$. Then, for any
positive integer $k$, we have
\begin{equation}\label{151017b}
{\rm dim}_k(X_1) +  {\rm dim}_k(X_2) 
\leq {\rm dim}^t_k(X_1) +  {\rm dim}^t_k(X_2)
\leq {\rm dim}^t_k(X_1+X_2).
\end{equation} 
\end{theorem}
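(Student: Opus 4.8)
The plan is to prove the two inequalities in \eqref{151017b} separately, the left-hand one being an immediate consequence of a previous result and the right-hand one carrying the real content. For the left inequality I would simply apply Theorem~\ref{151018b} to each factor: it gives ${\rm dim}_k^t(X_j) \geq {\rm dim}_k(X_j)$ for $j=1,2$, and summing these two inequalities yields ${\rm dim}_k(X_1)+{\rm dim}_k(X_2) \leq {\rm dim}^t_k(X_1)+{\rm dim}^t_k(X_2)$.

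For the right inequality, the strategy is to show that \emph{any} $k$-metric generator of the join $(X_1\cup X_2, d^t)$ restricts, on each factor, to a $k$-metric generator of that factor equipped with its truncated metric $d_j^t$. If ${\rm dim}^t_k(X_1+X_2)=+\infty$ there is nothing to prove, so suppose it is finite and let $S$ be any $k$-metric generator of $(X_1\cup X_2, d^t)$. The key observation is that for distinct $u,v\in X_1$ and any $w\in X_2$ we have $d^t(u,w)=t=d^t(v,w)$, so $w$ fails to distinguish $u$ from $v$; in bisector language, $X_2\subseteq B(u|v)$, hence $B^c(u|v)\subseteq X_1$. Moreover, for $w\in X_1$ we have $d^t(u,w)=d_1^t(u,w)$ and $d^t(v,w)=d_1^t(v,w)$, so $w$ distinguishes $u$ and $v$ in the join exactly when it does so in $(X_1,d_1^t)$; equivalently, the bisector of $u$ and $v$ computed in $(X_1,d_1^t)$ is $B(u|v)\cap X_1$, with the same complement inside $X_1$. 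Consequently the $k$ elements of $S$ that distinguish $u$ from $v$ all lie in $S\cap X_1$ and they distinguish $u$ from $v$ in $(X_1,d_1^t)$ as well. As $u,v$ were arbitrary, $S\cap X_1$ is a $k$-metric generator of $(X_1,d_1^t)$, so $|S\cap X_1|\geq {\rm dim}^t_k(X_1)$; by symmetry $|S\cap X_2|\geq {\rm dim}^t_k(X_2)$. Since $X_1\cap X_2=\varnothing$, we get $|S| = |S\cap X_1|+|S\cap X_2| \geq {\rm dim}^t_k(X_1)+{\rm dim}^t_k(X_2)$, and taking the infimum over all such $S$ gives ${\rm dim}^t_k(X_1+X_2)\geq {\rm dim}^t_k(X_1)+{\rm dim}^t_k(X_2)$.

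The only step requiring genuine care — and the nearest thing to an obstacle — is the verification that the bisectors of $(X_1,d_1^t)$ coincide, within $X_1$, with the traces on $X_1$ of the bisectors of pairs from $X_1$ taken in the join, and correspondingly that no point of $X_2$ ever helps resolve a pair lying inside $X_1$ (and symmetrically with the roles of $X_1$ and $X_2$ exchanged). Once this correspondence of bisectors is established, the counting argument above is automatic. One should also record in passing the trivial case ${\rm dim}^t_k(X_1+X_2)=+\infty$, and note that the standing assumption that every space under consideration has at least three points rules out any degeneracy coming from a factor that is too small.
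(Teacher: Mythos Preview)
Your proof is correct and follows essentially the same route as the paper's: the first inequality is obtained by summing the two instances of Theorem~\ref{151018b}, and the second by observing that for $u,v\in X_1$ the bisector in the join satisfies $B^t(u|v)=B^t_1(u|v)\cup X_2$, so that any $k$-metric generator of the join must meet $X_1$ in a $k$-metric generator of $(X_1,d_1^t)$ (and symmetrically for $X_2$). The paper works with a $k$-metric basis rather than an arbitrary generator, but this is an inessential difference; your closing remark about the three-point assumption is harmless but not actually needed for the argument.
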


We shall now give an example which shows that \eqref{160128b} can
hold; then we end with the proofs of Theorems \ref{151018b} and
\ref{160119a}, and stating a consequence of Theorem \ref{160119a}.

\begin{example}{\rm 
Let $X=\mathbb{R}$ and $d(x,y)= |x-y|$, so that ${\rm dim}_1(X) = 2$. 
We shall now show that if $t>0$ then ${\rm dim}^t_1(X) = +\infty$,
so that \eqref{160128b} can hold.
Suppose that $a<b$, and consider the bisector $B^t(a|b)$. If $x \leq
a-2t$, then $d^t(x,a) = d^t(x,b) = 2t$ so that $x \in B^t(a|b)$. Thus
$B^t(a|b) \supset (-\infty,a-2t]$. Now let $S$ be any finite set,
and let $s$ be the largest element in $S$. Then 
$B^t(s+2t,s+3t) \supset (-\infty, s] \supset S$, so that
${\rm dim}^t_1(X) = +\infty$.
}\end{example}

This is a convenient place to describe the notation that will be
used in the following two proofs. We have metric spaces $(X_1,d_1)$
and $(X_2,d_2)$ with $X_1\cap X_2 = \varnothing$. For $j=1,2$ we use
$B_j(u|v)$ for the bisectors in $X_j$, and ${\rm dim}_k(X_j)$ for
their metric dimensions. Now consider the join $(X_1\cup X_2,d^t)$,
and its metric subspaces $(X_j,d^t)$. We use $B^t(u|v)$ and
$B^t_j(u|v)$ for the bisectors in these spaces, and ${\rm dim}^t_k(X_1
+ X_2)$ and ${\rm dim}^t_k(X_j)$ for their metric dimensions. In
general, we write $[B]^c$ for the complement of a bisector $B$ of any
type.

We shall need the following lemma in our proof of Theorem
\ref{151018b}.  

\begin{lemma}\label{151018c}
Let $(X,d)$ be a metric space, and suppose that $0 < s< t$. Then
$B(u|v) \subset B^t(u|v) \subset B^s(u|v)$.
\end{lemma}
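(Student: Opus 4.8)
The plan is to unwind the definitions and reduce everything to a pointwise comparison of the distance functions $x \mapsto d^s(x,u)$ and $x \mapsto d^t(x,u)$. Recall that for a positive parameter $r$ we have $d^r(x,y) = \min\{d(x,y), 2r\}$, so that as $r$ increases the truncation level $2r$ rises and $d^r(x,y)$ can only increase (it equals $d(x,y)$ once $2r \geq d(x,y)$). The membership conditions to keep in mind are $x \in B(u|v) \iff d(x,u) = d(x,v)$, $x \in B^t(u|v) \iff d^t(x,u) = d^t(x,v)$, and $x \in B^s(u|v) \iff d^s(x,u) = d^s(x,v)$.

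First I would prove $B(u|v) \subset B^t(u|v)$. Take $x \in B(u|v)$, so $d(x,u) = d(x,v) =: \delta$. Then $d^t(x,u) = \min\{\delta, 2t\} = d^t(x,v)$, so $x \in B^t(u|v)$; this direction is immediate and needs no case analysis. Next I would prove $B^t(u|v) \subset B^s(u|v)$, which is the only step requiring a small amount of care. Suppose $x \in B^t(u|v)$, i.e. $\min\{d(x,u),2t\} = \min\{d(x,v),2t\}$; call this common value $c$. I would split into two cases according to whether $c = 2t$ or $c < 2t$. If $c < 2t$, then neither $d(x,u)$ nor $d(x,v)$ can exceed $2t$ (otherwise the corresponding minimum would be $2t = c$, contradicting $c < 2t$), hence $d(x,u) = c = d(x,v)$, so $x \in B(u|v)$, and by the first inclusion $x \in B^s(u|v)$. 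If instead $c = 2t$, then $\min\{d(x,u),2t\} = 2t$ forces $d(x,u) \geq 2t > 2s$, and likewise $d(x,v) \geq 2t > 2s$; therefore $d^s(x,u) = \min\{d(x,u),2s\} = 2s = \min\{d(x,v),2s\} = d^s(x,v)$, so $x \in B^s(u|v)$. Combining the cases gives $B^t(u|v) \subset B^s(u|v)$, and chaining the two inclusions completes the proof.

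The only potential obstacle is bookkeeping with the $\min$: one must be careful that in the case $c=2t$ the individual distances $d(x,u)$ and $d(x,v)$ need not be equal, only that each is at least $2t$, which is still enough to make both $d^s$-distances equal to $2s$. No deeper idea is needed; the statement is really just the elementary observation that truncating at a lower level identifies more pairs of points.

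\begin{proof}
We use $d^r(x,y) = \min\{d(x,y),2r\}$ throughout.

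For the first inclusion, let $x \in B(u|v)$, so $d(x,u)=d(x,v)$. Then
$d^t(x,u) = \min\{d(x,u),2t\} = \min\{d(x,v),2t\} = d^t(x,v)$, so $x \in B^t(u|v)$.

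For the second inclusion, let $x \in B^t(u|v)$ and set $c = d^t(x,u) = d^t(x,v)$. If $c < 2t$, then $\min\{d(x,u),2t\} < 2t$ forces $d(x,u) < 2t$, whence $d(x,u) = c$; similarly $d(x,v)=c$, so $x \in B(u|v)$ and hence, by the first inclusion applied with $s$ in place of $t$, $x \in B^s(u|v)$. If $c = 2t$, then $\min\{d(x,u),2t\} = 2t$ forces $d(x,u) \geq 2t$, and likewise $d(x,v) \geq 2t$; since $s < t$ we get $d(x,u) \geq 2t > 2s$ and $d(x,v) \geq 2t > 2s$, so $d^s(x,u) = 2s = d^s(x,v)$ and $x \in B^s(u|v)$. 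In either case $x \in B^s(u|v)$, which proves $B^t(u|v) \subset B^s(u|v)$.
\end{proof}
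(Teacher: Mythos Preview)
Your proof is correct and follows essentially the same approach as the paper's: the paper packages your case split as the preliminary observation that $\min\{\alpha,r\}=\min\{\beta,r\}$ holds iff $r\leq\min\{\alpha,\beta\}$ or $\alpha=\beta$, and then applies it exactly as you do. Your version simply makes the two cases explicit rather than stating the dichotomy abstractly first.
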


\begin{proof}
First, observe that for all real $r$, and all real, distinct $\alpha$
and $\beta$, we have $\min\{\alpha,r\} = \min\{\beta,r\}$ if and only
if (i) $r \leq \min\{\alpha,\beta\}$ or (ii) $\alpha = \beta$. 
Now suppose that $x\in B^t(u|v)$. Then $d^t(x,u)=d^t(x,v)$ so that
$\min\{d(x,u),t \} = \min \{d(x,v),t \}$. This implies that 
$t \leq \min\{d(x,u),d(x,v)\}$ or $d(x,u)=d(x,v)$, and (since $s <
t$) in both cases we have $d^s(x,u) = d^s(x,v)$. Thus
$B^t(u|v) \subset B^s(u|v)$. The proof that $B(u|v) \subset B^t(u|v)$
is trivial: if $x\in B(u|v)$ then $d(x,u)=d(x,v)$ so that
$d^t(x,u)=d^t(x,v)$; hence $x\in B^t(u|v)$.
\end{proof}

\begin{proof}[The proof of Theorem \ref{151018b}] 
Let $A$ be any finite subset of $X$. Then, by Lemma \ref{151018c}, for
all $u$ and $v$ in $X$ with $u\neq v$, we have
$$|A \cap [B(u|v)]^c| \geq 
|A \cap [B^t(u|v)]^c| \geq |A \cap [B^s(u|v)]^c|.$$
It follows that if $A$ is a $k$-metric generator for $(X,d^s)$ 
(that is, if, for all $u$ and $v$, $|A \cap [B^s(u|v)]^c|\geq k$),
then it is also a $k$-metric generator for $(X,d^t)$. Thus the minimum
of $|S|$ taken over all $k$-metric generators $S$ of $(X,d^t)$ is 
less than or equal to the minimum over all $k$-metric generators of
$(X,d^s)$; hence ${\rm dim}^s_k(X) \geq {\rm dim}^t_k(X)$. The proof
that ${\rm dim}^t_k(X) \geq {\rm dim}_k(X)$ is entirely similar.
\end{proof}

\begin{proof}[The proof of Theorem \ref{160119a}]
 The first inequality follows from Theorem \ref{151018b}.
The inequality is trivially true if ${\rm dim}^t_k(X_1+X_2) =
+\infty$, so we may assume that there is a $k$-metric basis, say $W$,
of $X_1+X_2$. Thus $|W| = {\rm dim}^t_k(X_1+X_2)$.
Now take any $u$ and $v$ in $X_1$; then 
$$ B^t(u|v) = \{x \in X_1\cup X_2: d^t(x,u)=d^t(x,v)\} 
= B^t_1(u|v) \cup X_2, $$
so that, from Lemma \ref{151018c}, $[B^t(u|v)]^c = [B_1^t(u|v)]^c
\subset X_1$. 
We put $W_j=W \cap X_j$, $j=1,2$. Then, if we let $u$ and $v$
vary over $X_1$, with $u \neq v$, we find that
$$k \leq |[B^t(u|v)]^c \cap W| = |[B_1^t(u|v)]^c \cap X_1 \cap W|
= |[B_1^t(u|v)]^c \cap W_1|,$$ 
so that ${\rm dim}^t_k(X_1) \leq |W_1|$.
Similarly, ${\rm dim}^t_k(X_2) \leq |W_2|,$ so that
$${\rm dim}^t_k(X_1) + {\rm dim}^t_k(X_2) \leq |W_1|+|W_2| = |W|
= {\rm dim}^t_k(X_1 + X_2)$$
as required.
\end{proof}

If $(X_j,d_j)$, $j=1,2$, are metric spaces, each with diameter less
than $t$, such that $X_1\cap X_2= \varnothing$, the for any $k$-metric basis $A_i$ of $(X_j,d_j)$, $A_1\cup A_2$ is a $k$-metric generator for the join $(X_1\cup X_2,d^t)$. This shows that $\dim^t_k(X_1+X_2)\le \dim_k(X_1)+\dim_k(X_2)$, and so Theorem \ref{160119a} leads to the following corollary. 

\begin{corollary}\label{TrivialCaseJoin}
Let $(X_j,d_j)$, $j=1,2$, be metric spaces, each with diameter less
than $t$, such that $X_1\cap X_2= \varnothing$. Then, for
$k=1,2,\ldots$, $\dim^t_k(X_1+X_2)=\dim_k(X_1)+\dim_k(X_2)$.
\end{corollary}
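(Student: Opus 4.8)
The plan is to prove the two inequalities $\dim^t_k(X_1+X_2) \le \dim_k(X_1) + \dim_k(X_2)$ and $\dim^t_k(X_1+X_2) \ge \dim_k(X_1) + \dim_k(X_2)$ and combine them. The lower bound is already supplied by Theorem \ref{160119a}, so the real work is the upper bound, and this is precisely where the diameter hypothesis is used.

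First I would record the elementary but crucial consequence of $\operatorname{diam}(X_j) < t$: for $x,y$ in the same $X_j$ one has $d_j(x,y) < t < 2t$, so $d^t(x,y) = \min\{d_j(x,y),2t\} = d_j(x,y)$. Thus the metric $d^t$ restricted to each $X_j$ is exactly $d_j$; in particular $\dim^t_k(X_j) = \dim_k(X_j)$, and for $u,v \in X_1$ one has $B^t(u|v) = B_1(u|v)\cup X_2$, so the trace of this bisector on $X_1$ is just $B_1(u|v)$ (and symmetrically for $X_2$).

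For the upper bound I would assume $\dim_k(X_1)$ and $\dim_k(X_2)$ are finite (the infinite case being trivial once the lower bound is in hand) and pick a $k$-metric basis $A_j$ of $(X_j,d_j)$; since $X_1\cap X_2 = \varnothing$ these are disjoint. I would then check that $A_1\cup A_2$ is a $k$-metric generator of $(X_1\cup X_2, d^t)$ by a short case analysis on a pair of distinct points $u,v$. If both lie in $X_1$, no point of $X_2$ separates them (all $d^t$-distances to $X_2$ equal $t$), while a point of $A_1$ separates them in the join if and only if it does so in $(X_1,d_1)$ by the previous paragraph, so at least $k$ elements of $A_1$ work; the case $u,v\in X_2$ is symmetric. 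If $u\in X_1$ and $v\in X_2$, then every point of $X_1$ separates them: the $d^t$-distance from $u$ to any $w\in X_1$ is $d_1(u,w)$, which is $0$ when $w=u$ and otherwise lies in $(0,\operatorname{diam}(X_1)]\subset(0,t)$, whereas the $d^t$-distance from $v$ to $w$ equals $t$; hence all of $A_1$ separates $u$ and $v$, and $|A_1| = \dim_k(X_1)\ge k$. This yields $\dim^t_k(X_1+X_2)\le |A_1|+|A_2| = \dim_k(X_1)+\dim_k(X_2)$.

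Finally I would invoke Theorem \ref{160119a}, whose two inequalities combine to $\dim_k(X_1)+\dim_k(X_2) \le \dim^t_k(X_1+X_2)$ with no diameter assumption, and conclude equality (to be read as $+\infty = +\infty$ when some $\dim_k(X_j)$ is infinite). The only step that needs any care is the mixed case $u\in X_1$, $v\in X_2$ of the case analysis, where the disjointness of $X_1$ and $X_2$ and the \emph{strict} diameter bound are both genuinely used (strictness is what forces $d_1(u,w)<t$, so that $u$ and $v$ are actually separated); everything else is routine bookkeeping.
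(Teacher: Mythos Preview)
Your proposal is correct and follows exactly the paper's approach: the upper bound comes from showing that the union $A_1\cup A_2$ of $k$-metric bases is a $k$-metric generator for the join (which is precisely what the paper asserts in the paragraph preceding the corollary), and the lower bound is Theorem~\ref{160119a}. Your case analysis simply spells out what the paper leaves implicit, and your remark that the diameter hypothesis forces $d^t$ to restrict to $d_j$ on each $X_j$ (so that $\dim_k^t(X_j)=\dim_k(X_j)$) is exactly the mechanism at work.
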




\end{document}